\documentclass[12pt,a4paper]{amsart}
\usepackage{amsopn,amssymb,mathrsfs,color}
\textwidth 15cm \hoffset -30pt

\newtheorem{theorem}{Theorem}
\newtheorem{lemma}[theorem]{Lemma}
\newtheorem{proposition}[theorem]{Proposition}
\newtheorem{corollary}[theorem]{Corollary}

\theoremstyle{definition}
\newtheorem{definition}[theorem]{Definition}

\theoremstyle{remark}
\newtheorem{example}[theorem]{Example}
\newtheorem{remark}[theorem]{Remark}

\DeclareMathOperator{\card}{\#}
\DeclareMathOperator{\ext}{ext}
\DeclareMathOperator{\diam}{diam}
\DeclareMathOperator{\s}{span}
\DeclareMathOperator{\supp}{supp}

\newcommand{\cl}[1]{\ensuremath{\overline{{#1}}}}
\newcommand{\dset}[2]{\left\{{#1} \;:\;\, {#2}\right\}}
\newcommand{\eps}{\ensuremath{\varepsilon}}
\newcommand{\Ksig}{\ensuremath{K_\sigma}}
\newcommand{\n}[1]{\ensuremath{\left\|{#1}\right\|}}
\newcommand{\N}{\mathbb{N}}
\newcommand{\ndot}{\ensuremath{\left\|\cdot\right\|}}
\newcommand{\R}{\mathbb{R}}
\newcommand{\set}[2]{\{{#1} \;:\;\, {#2}\}}
\newcommand{\st}{($*$)}
\newcommand{\tri}{{\displaystyle |\kern-.9pt|\kern-.9pt|}}
\newcommand{\tn}[1]{\ensuremath{\tri{#1}\tri}}
\newcommand{\tndot}{\ensuremath{\tri\cdot\tri}}

\pagestyle{plain}

\allowdisplaybreaks

\begin{document}
\title{Polyhedrality in pieces}
\begin{abstract}
The aim of this paper is to present two tools, Theorems \ref{renorm-pieces}
and \ref{boundary}, that make the task of finding equivalent polyhedral
norms on certain Banach spaces easier and more transparent. The hypotheses
of both tools are based on countable decompositions, either of the unit
sphere $S_X$ or of certain subsets of the dual ball $B_{X^*}$ of a given
Banach space $X$. The sufficient conditions of Theorem \ref{renorm-pieces}
are shown to be necessary in the separable case. Using Theorem \ref{boundary},
we can unify two known results regarding the polyhedral renorming of certain
$C(K)$ spaces, and spaces having an (uncountable) unconditional basis. New
examples of spaces having equivalent polyhedral norms are given in the final section.
\end{abstract}

\author{V.\ P.\ Fonf, A.\ J.\ Pallar\'es, R.\ J.\ Smith, S. Troyanski\\ \today}

\address{Department of Mathematics, Ben Gurion University of the Negev, Beer-Sheva, Israel}\email{fonf@math.bgu.ac.il}
\address{Departamento de Matem\'aticas, Universidad de Murcia, Campus de
Espinardo, 30100 Murcia (Spain)} \email{apall@um.es}
\address{School of Mathematical Sciences, University College Dublin, Belfield, Dublin 4, Ireland} \email{richard.smith@ucd.ie}
\address{Departamento de Matem\'aticas, Universidad de
Murcia, Campus de Espinardo. 30100 Murcia (Spain).} \email{stroya@um.es}

\keywords{Polyhedral norms, renormings, boundaries, polytopes, countable covers.}
\thanks{The first named author is supported by Israel Science
Foundation, Grant 209/09. The second named author is supported by MCI
MTM2011-25377. The third and fourth named authors are supported
financially by Science Foundation Ireland under Grant Number `SFI
11/RFP.1/MTH/3112'. The fourth named author is also supported by MCI
MTM2011-22457 and the project of the Institute of Mathematics and Informatics,
Bulgarian Academy of Science.}
\subjclass[2000]{46B20}
\date{\today}
\maketitle

\section{Introduction}

Different notions of polyhedrality in infinite-dimensional spaces
were considered in \cite{FV}, as well as the relations between them.
In this paper, we consider the original notion of polyhedrality given
by Klee \cite{Klee}:\ a Banach space $X$ is said to be {\em polyhedral}
when the unit balls of its finite dimensional subspaces are polytopes.

We are interested in finding conditions that allow us to replace the
norm on a given Banach space with an equivalent polyhedral norm. When
a Banach space can be renormed in this way, it is called {\em isomorphically
polyhedral}. Let us recall the main tool used in this endeavour.

\begin{definition}[The {\st} property]
Let $X$ be a Banach space. We say that a set $F\subset X^*$ has
property {\st}  if, for every $w^*$-limit point $g$ of $F$ (i.e.\
any $w^*$-neighbourhood of $g$ contains infinitely many points of
$F$), we have $g(x) < 1$ whenever $\sup\set{f(x)}{f\in F}=1$.
\end{definition}

It is known (see e.g.\ \cite[Proposition 6.11]{FLP}) that $X$ is
polyhedral whenever there is a 1-norming subset $B\subset
B_{X^*}$ of the dual unit ball having {\st}. Most of the notions
considered in \cite{FV} imply that there exists one of these 1-norming
sets.

Recall that a subset $B\subset B_{X^*}$ is called a {\em boundary}
if, for every $x \in S_X$, there exists $f_x\in B$ such that $f_x(x) = 1$.
For any space, the unit sphere $S_{X^*}$ and the extreme points $\ext(B_{X^*})$ of
the unit ball are boundaries, by the Hahn-Banach and Krein-Milman Theorems,
respectively. If $B$ is a 1-norming subset of the dual unit ball having {\st},
then it is automatically a boundary. The set of boundaries of a space is
highly sensitive to changes to the norm. It is known that if $X$ is a separable
Banach space, then the following three statements are equivalent:\ $X$ is isomorphically polyhedral,
$X$ admits an equivalent norm that supports a {\em countable} boundary, and
$X$ admits an equivalent norm that supports a countable boundary having {\st} \cite{fonf2}.

However, finding renormings that support boundaries having {\st}, countable
or otherwise, can be quite difficult, even for concrete separable spaces.
In this paper we introduce two tools, Theorems \ref{renorm-pieces} and \ref{boundary},
that can be used, often in conjunction, to make the task of finding polyhedral
renormings easier and more transparent. The
hypotheses of both tools are based on countable decompositions,
either of the unit sphere $S_X$ or of certain subsets of the dual ball
$B_{X^*}$ (or both, in the case of Theorem \ref{renorm-pieces}). We believe
that these hypotheses are easier to verify in many concrete cases. We mention also
that the hypotheses of Theorem \ref{renorm-pieces} are also necessary
in the case of separable Banach spaces (see Proposition \ref{nec-condition}).

%
We introduce Theorems \ref{renorm-pieces} and \ref{boundary} in this section,
together with some of their corollaries. The proofs of these theorems reside in
subsequent sections. Some applications and examples are given
in the final section.

Before presenting Theorem \ref{renorm-pieces}, we need to generalize the notion of boundary.

\begin{definition}[Relative boundaries]
Let $X$ be a Banach space.  We shall say that a set $F\subset X^{*}$ is a
{\em relative boundary} if, 
whenever $x\in X$ satisfies $\sup\set{f(x)}{f \in F} = 1$,
then there exists $f_x\in F$ such that $f_x(x) = 1$.
\end{definition}

Relative boundaries are sometimes called {\em James boundaries} in the
literature. A different generalization of boundary for pieces of a space
can be found in \cite{FL}.

It is easy to prove that each bounded set $F\subset X^*$ having property
{\st} is a relative boundary. For countable sets, the converse holds up to a
perturbation. The following statement is the antecedent of the main results of this paper.

\begin{lemma}[{cf \cite[Theorem 3]{fonf2}}]\label{lemma4}
Let $\set{f_n}{n\in \N} \subset X^*$ be a countable relative boundary, and
let $a_n > 1$, $n\in \N$, form a decreasing sequence that converges to 1. Then the set
$\set{a_n f_n}{n \in \N}$ has {\st}.
\end{lemma}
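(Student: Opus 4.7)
The plan is to proceed by contradiction. Fix a $w^*$-accumulation point $g$ of $F = \set{a_n f_n}{n \in \N}$ and a vector $x \in X$ with $\sup\set{f(x)}{f \in F} = 1$; the task is to show $g(x) < 1$.

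As preliminaries, the hypothesis on $x$ gives $a_n f_n(x) \leq 1$ for every $n$, and hence also $g(x) \leq 1$ by $w^*$-approximation at the single coordinate $x$. It therefore suffices to rule out $g(x) = 1$. Assuming this equality, I would consider the $w^*$-neighbourhoods $U_k = \set{h \in X^*}{|h(x) - g(x)| < 1/k}$, each of which contains infinitely many points of $F$, and inductively choose indices $n_1 < n_2 < \ldots$ with $a_{n_k} f_{n_k} \in U_k$; such a choice is possible because infinitely many distinct elements of $F$ cannot all arise from a finite range of indices. Then $n_k \to \infty$ and $a_{n_k} f_{n_k}(x) \to 1$, so from $a_{n_k} \to 1$ I would deduce $f_{n_k}(x) \to 1$, and hence $\sup_n f_n(x) = 1$. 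The relative boundary hypothesis now supplies some index $m$ with $f_m(x) = 1$, whence $a_m f_m(x) = a_m > 1$, contradicting $a_n f_n(x) \leq 1$ for all $n$.

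The whole argument is essentially bookkeeping; the only mildly delicate point is the passage from the $w^*$-accumulation property of $g$ to a genuine sequence of indices tending to infinity. This is not a real obstacle because $F$ is countable and the only $w^*$-information required about $g$ is the value of $h(x)$ at the single coordinate $x$, so the basic neighbourhoods $U_k$ already suffice to run the inductive extraction.
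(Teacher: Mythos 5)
Your proof is correct and follows essentially the same route as the paper: the paper omits a direct argument, deriving the lemma from Proposition \ref{prop4ext}, whose proof in the relevant case performs exactly your extraction of indices $n_k \to \infty$ and uses $a_{n_k} \to 1$ to pass from $a_{n_k}f_{n_k}(x) \to 1$ to $f_{n_k}(x) \to 1$, with the relative boundary property supplying the final contradiction. The one point you gloss over --- that $\sup_n f_n(x) \le 1$ must be checked before the relative boundary hypothesis can be invoked --- is immediate from your preliminary observation that $a_n f_n(x) \le 1$ and $a_n > 1$ give $f_n(x) \le 1/a_n < 1$ for every $n$.
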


Lemma \ref{lemma4} is implied by Proposition \ref{prop4ext}, so we
omit its proof. The following type of question arises naturally. Imagine that
$X$ admits a sequence of countable relative boundaries $\set{F_n}{n\in \N}$,
such that $F= \bigcup_{n=1}^\infty F_n$ is 1-norming. Does $X$ admit
an equivalent norm having a countable boundary? The answer is no. For
any separable Banach space $X$, there is a 1-norming sequence $f_n\in S_{X^*}$,
$n \in \N$. Clearly, the singletons $F_n=\{f_n\}$ form a sequence of
relative boundaries having {\st}, yet $X$ need not be
isomorphically polyhedral, or admit an equivalent norm having a countable
boundary. For this reason, if we wish to glue or piece
together a sequence of relative boundaries to obtain for instance a countable boundary
having {\st} (with respect to a new norm), we require some additional conditions.

Our result in this sense is the following.

\begin{theorem}\label{renorm-pieces}
Let $X$ be a Banach space and suppose that we have sets $S_n \subset S_X$
and an increasing sequence $H_n \subset B_{X^*}$ of relative boundaries,
such that $S_X = \bigcup_{n=1}^\infty S_n$ and the numbers
\[
b_n \;=\; \inf\set{\sup\set{h(x)}{h \in  H_n}}{x \in S_n},
\]
are strictly positive and converge to 1. Then, for a suitable sequence
$(a_n)_{n=1}^\infty$, the set $F = \bigcup_{n=1}^\infty a_n(H_n\setminus H_{n-1})$
is a boundary of an equivalent norm $\tndot$. Moreover, if each $H_n$
has {\st}, then $F$ has {\st} and $\tndot$ is polyhedral.
\end{theorem}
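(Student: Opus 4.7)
The plan is to choose $a_n = 1/b_n$ (after a standard reduction to make $(b_n)$ strictly increasing to $1$, which is harmless), so that $(a_n)$ is strictly decreasing to $1$ and $a_n b_n = 1$, and to define $\tn{x} = \sup\{f(x) : f \in F\}$. By replacing each $H_n$ with $H_n \cup (-H_n)$ we may assume every $H_n$ is symmetric, so $F = -F$ and $\tndot$ is a norm. Equivalence $\|x\| \leq \tn{x} \leq a_1 \|x\|$ is immediate: the upper bound follows from $F \subset a_1 B_{X^*}$; for the lower bound, pick $m$ with $x/\|x\| \in S_m$, so $\sup_{h \in H_m} h(x) \geq b_m \|x\|$, and note that each such $h$ lies in a unique block $H_k \setminus H_{k-1}$ with $k \leq m$, so $a_k h \in F$; a short monotonicity calculation gives $\tn{x} \geq a_m b_m \|x\| = \|x\|$.

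The crux is showing that $F$ is a boundary of $\tndot$. Fix $x$ with $\tn{x} = 1$. The pivotal inequality is $m_n := \sup_{h \in H_n} h(x) \leq 1/a_n$ for every $n$: otherwise some $h \in H_n$ gives $h(x) > 1/a_n$, and placing $h \in H_k \setminus H_{k-1}$ with $k \leq n$ yields $a_k h(x) \geq a_n h(x) > 1$, contradicting $\tn{x} = 1$. To locate $N$ with equality $m_N = 1/a_N$, I approximate $\tn{x} = 1$ via a sequence $a_{n_j} h_j(x) \to 1$ with $h_j \in H_{n_j} \setminus H_{n_j - 1}$: a bounded subsequence of $(n_j)$ yields equality immediately at some fixed $N$; if $n_j \to \infty$ then $a_{n_j} \to 1$ forces $h_j(x) \to 1$, hence $\|x\| = 1$, so $x \in S_N$ for some $N$ and $m_N \geq b_N = 1/a_N$, again forcing equality. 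The relative boundary property of $H_N$ applied to $a_N x$ (for which $\sup_{H_N} h(a_N x) = 1$) produces $h^* \in H_N$ with $h^*(x) = 1/a_N$; writing $h^* \in H_{k^*} \setminus H_{k^* - 1}$ with $k^* \leq N$ yields $a_{k^*} h^* \in F$ and $a_{k^*} h^*(x) = a_{k^*}/a_N \geq 1$, which combined with $\tn{x} = 1$ forces equality, establishing attainment.

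For the polyhedral conclusion, assume each $H_n$ has \st. To verify $F$ inherits \st, take a $w^*$-limit point $g$ of $F$, realised as the $w^*$-limit of a net $(a_{n_\alpha} h_\alpha)_\alpha$ with $h_\alpha \in H_{n_\alpha} \setminus H_{n_\alpha - 1}$, together with $x$ satisfying $\sup_F f(x) = 1$. If $(n_\alpha)$ is bounded at some $N$ on a subnet, then $g/a_N$ is a $w^*$-limit point of $H_N$, and \st of $H_N$ applied to $x/m_N$ (which satisfies $\sup_{H_N} h(x/m_N) = 1$ whenever $m_N > 0$, by the equality established above) yields $g(x) < a_N m_N \leq 1$; the case $m_N \leq 0$ is trivial. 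The unbounded case $n_\alpha \to \infty$ is the more delicate and is handled by an extension of Lemma \ref{lemma4}, namely Proposition \ref{prop4ext}: the combination of $a_{n_\alpha} \to 1$ with the relative-boundary-plus-\st structure of each $H_n$ forces $g(x) < 1$. With $F$ a $1$-norming set having \st, \cite[Proposition 6.11]{FLP} concludes that $\tndot$ is polyhedral.

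The principal obstacle is the attainment step for $F$: identifying a finite $N$ with $m_N = 1/a_N$ and matching the scale of the attaining functional via monotonicity of $(a_n)$. The \st-transfer in the unbounded case of the final step is the secondary delicacy, subsumed by Proposition \ref{prop4ext}.
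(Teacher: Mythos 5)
Your proof of the first assertion (that $F$ is a boundary of $\tndot$) is correct, and it takes a slightly different route from the paper's: rather than forcing a strict gap $\n{x}<\tn{x}$ and reducing the supremum defining $\tn{x}$ to a maximum over finitely many blocks, you split according to whether an approximating sequence $a_{n_j}h_j(x)\to\tn{x}=1$ has bounded block indices, and in the unbounded case deduce $\n{x}=1$ and use $x\in S_N$. The genuine gap is in the {\st} part, and it is created precisely by your choice $a_n=1/b_n$, i.e.\ $a_nb_n=1$. The ``unbounded'' case of your {\st} verification, which you delegate entirely to Proposition \ref{prop4ext}, is not automatic: that proposition requires the hypothesis $\sup\set{|h(x)|}{h\in H}<\sup\set{f(x)}{f\in F}$ for $x\neq 0$, which you never verify and which genuinely fails for your $a_n$ --- and with it the conclusion fails too. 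Concretely, take $X=c$ with the supremum norm, put $c_n=1-2^{-n}$, and set
\[
H_n=\set{\pm e_k^*}{k\leq n}\cup\set{\pm c_k\delta_\infty}{k\leq n},
\qquad
S_n=\set{x\in S_X}{\sup_{h\in H_n}h(x)\geq c_n},
\]
where $e_k^*(x)=x_k$ and $\delta_\infty(x)=\lim_kx_k$. Each $H_n$ is a finite subset of $B_{X^*}$, hence a relative boundary having {\st} vacuously; $S_X=\bigcup_nS_n$ (if $|\lim_kx_k|<1$ then $\sup_k|x_k|=1$ is attained at some coordinate, and otherwise $c_n|\lim_kx_k|=c_n$); and $b_n=c_n$ exactly, as witnessed by the vectors $(c_n,\dots,c_n,1,1,\dots)$. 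Since $(c_n)$ is already strictly increasing to $1$, your prescription gives $a_n=1/c_n$, so $a_nc_n\delta_\infty=\delta_\infty$ and $F=\{\pm\delta_\infty\}\cup\set{\pm a_ne_n^*}{n\in\N}$. The infinitely many distinct points $a_ne_n^*$ converge $w^*$ to $\delta_\infty$, so $\delta_\infty$ is a $w^*$-limit point of $F$; yet for $x^*=(c_1,c_2,c_3,\dots)$ one has $\sup\set{f(x^*)}{f\in F}=\sup_na_nc_n=1$ while $\delta_\infty(x^*)=1$. Hence $F$ fails {\st}.

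The repair is exactly the paper's choice of sequence: with $c_n=\inf\set{b_m}{m\geq n}$, take any decreasing $a_n\to1$ satisfying $a_nc_n>1$ (the paper uses $a_n=(1+2^{-n})c_n^{-1}$). This yields the strict inequality $\n{x}<\tn{x}$ for every $x\neq0$, whence $\sup\set{|h(x)|}{h\in H}\leq\n{x}<\tn{x}=\sup\set{f(x)}{f\in F}$, which is precisely the hypothesis Proposition \ref{prop4ext} uses to dispose of limit points arising from functionals whose block index tends to infinity. Your boundary argument survives this change with trivial modifications: its unbounded case becomes vacuous, since $h_j(x)\to1$ would force $\n{x}\geq1$, contradicting $\n{x}<\tn{x}=1$.
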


We postpone the proof of the theorem to Section \ref{pieces-proof}.
The condition that $b_n > 0$ for all $n$ is essential. For instance, take
$X=\ell_2$ with the canonical basis $\{e_n\}_{n=1}^\infty$. Set $H_n = \{\pm
e_1^*,\dots, \pm e_n^* \}$, $S_1=S_{\ell_2}$ ($b_1=0$) and $S_n =
\{e_n\}$ for $n\geq 2$ ($b_n = 1$). Each $H_n$ has {\st},
but $\ell_2$ has no equivalent polyhedral norms. The condition $b_n
\to 1$ is also essential. Given any 1-norming set $\{f_n\}_{n=1}^\infty \subset
S_{X^*}$ of a separable Banach space $X$, we can define
$H_n=\{f_1,\dots,f_n\}$ and, given $0 < \alpha <1$, let $b_n = \alpha$
and $S_n = \set{x\in S_X}{f_n(x)\geq\alpha}$.

For separable Banach spaces, Theorem \ref{renorm-pieces} has
some reasonably direct consequences that enable us to obtain some known
results, as well as yielding new examples of isomorphically polyhedral
spaces (see Example \ref{new-example}). In the case of spaces with monotone
bases, it is possible to combine Theorems \ref{renorm-pieces} and
\ref{boundary} to obtain a sufficient condition that does not refer to
the dual space (see Corollary \ref{uncbasissplitting}).

In separable spaces isomorphic polyhedrality is equivalent to the
existence of equivalent norms supporting countable boundaries.
It is natural to ask what linear topological conditions could be imposed
on boundaries to obtain a corresponding result in the general non-separable case.
The answer to this question is less clear and, to date, there is no known analogue.
Some partial results are known. We remark that the existence of a
norm-discrete and $w^*$-separable boundary is insufficient
to guarantee the existence of polyhedral renormings:\ consider for
instance the space $C(K)$, where $K$ is Kunen's compact $S$-space
(see \cite{JM} and \cite[p.\ 450]{FPST}). On the other hand, if for some
$r \in [0,1)$ the Banach space $X$ satisfies $\left(\ext{B_{X^*}}\right)'
\subset rB_{X^*}$, then $X$ is polyhedral (see e.g.\ \cite{FV}).

We go some way to bridging the gap in our knowledge by introducing the main concept
required for Theorem \ref{boundary}. Our aim is to introduce a fairly
widely applicable method of manufacturing boundaries having {\st}
for general Banach spaces. As alluded to in the abstract, we can use this method to unify \cite[Theorems 11 and
24]{FPST}. In order to do this, we introduce a strengthening of the
notion of sets having {\em small local diameter} \cite[p.\
162]{jnr:92}.

\begin{definition}\label{banach-lrc}
Let $X$ be a Banach space. We say that $E \subset X^*$ is {\em $w^*$-locally
relatively norm-compact} (or $w^*$-LRC for short) if, given
$y \in E$, there exists a $w^*$-open set $U \subset X^*$, such that
$y \in U$ and $\cl{E \cap U}^{\ndot}$ is norm-compact.

In addition, $E \subset X^*$ is called {\em $\sigma$-$w^*$-locally relatively
norm-compact} ($\sigma$-$w^*$-LRC for short) if it is the union of countably
many $w^*$-LRC sets.
\end{definition}

Let us recall that an M-basis $\{x_i,x_i^*\}_{i \in I}$ of a Banach space
is called {\em strong} if $x$ is in the norm-closed span of
$\{ x_i^*(x) x_i : i\in I\}$, and that Schauder bases and uncountable
unconditional bases are instances of strong M-bases.

\begin{example}\label{weakstarlrcex}$\;$
\begin{enumerate}
\item Any norm compact or $w^*$-discrete subset of a dual space is $w^*$-LRC.
\item Let $\{x_i,x_i^*\}_{i \in I}$ be a strong M-basis of a Banach space
$X$. Given $f \in X^*$, define
\[
\supp(f) \;=\; \set{i \in I}{f(x_i) \neq 0}.
\]
Since the basis is strong, we know that $f \in
\cl{\s}^{w^*}(x_i^*)_{i \in \supp f}$. Suppose that $E \subset X^*$
has the property that $\card(\supp(f))=\card(\supp(g)) < \infty$
whenever $f,\,g \in E$. Then $E$ is $w^*$-LRC. Indeed, if $f \in E$
then define the $w^*$-open set
\[
U = \set{g \in X^*}{{\textstyle 0 < |g(x_i)| < |f(x_i)|+1}
\text{ for all }i \in \supp f}.
\]
It is clear that if $g \in E \cap U$, then $\supp(g) = \supp(f)$.
Thus $E \cap U$ is a norm bounded subset of a finite-dimensional space.
\end{enumerate}
\end{example}

Evidently, any countable set in the dual unit ball is $\sigma$-$w^*$-LRC
and $w^*$-$\sigma$-compact. The next result extends to the general case
the concept of countable boundary for polyhedral renorming. Given $\eps > 0$
and norms $\ndot$ and $\tndot$ on a Banach space $X$,
we say that $\tndot$ {\em $\eps$-approximates} $\ndot$ if
\[
(1-\eps)\n{x} \leq \tn{x} \leq (1+\eps)\n{x},
\]
whenever $x \in X$.

\begin{theorem}\label{boundary}
Let $X$ be a Banach space and suppose that $E$ is a
$\sigma$-$w^*$-LRC and $w^*$-$\sigma$-compact subset of $X^*$ that
contains a boundary of $X$. Then, given $\eps>0$, there exists a new
norm $\tndot$ on $X$ that $\eps$-approximates the original norm and
admits a boundary having {\st}. In particular, $\tndot$ is
polyhedral.
\end{theorem}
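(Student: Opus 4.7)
The plan is to decompose $E$ into countably many norm-compact pieces, take a finite $\eps_n$-net in each, and then feed these into Theorem \ref{renorm-pieces}. Without loss of generality $E \subset B_{X^*}$: each $w^*$-LRC piece remains $w^*$-LRC after intersection with $B_{X^*}$ (norm-closures can only shrink), and each $w^*$-compact piece remains so since $B_{X^*}$ is $w^*$-closed.

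The main obstacle is combining the two hypotheses on $E$ to realise it as a countable union of norm-compact sets, $E \subset \bigcup_n K_n$ with $K_n$ increasing and norm-compact. The driving observation is that whenever a $w^*$-compact $K$ lies inside a single $w^*$-LRC set $E'$, a finite subcover argument applied to the local witnesses $U_y$ from the LRC property at each $y \in K$ exhibits $K$ as a finite union of norm-relatively-compact sets, hence norm-compact. The technical subtlety is that a given $w^*$-compact slice of $E$ may straddle several $w^*$-LRC pieces, so one must refine carefully via a double indexing over the two decompositions.

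Having obtained the $K_n$, pick $\eps_n \searrow 0$ and a finite $\eps_n$-net $F_n \subset K_n$ of $K_n$ in norm; set $H_n = \bigcup_{k \leq n} F_k$. Finiteness of each $H_n$ makes it both a relative boundary (the supremum is a maximum) and a set with \st\ (no $w^*$-limit points exist). Taking $b_n = 1 - \eps_n$ and
\[
S_n = \set{x \in S_X}{\sup_{h \in H_n} h(x) \geq b_n},
\]
the covering $S_X = \bigcup_n S_n$ follows because, given $x \in S_X$, the boundary inside $E$ supplies some $f_x \in K_{n_0}$ with $f_x(x) = 1$, and any $g \in F_n$ within $\eps_n$ of $f_x$ satisfies $g(x) \geq 1 - \eps_n = b_n$ once $n \geq n_0$. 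The hypotheses of Theorem \ref{renorm-pieces} are now in place.

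Applying that theorem yields a sequence $a_n \searrow 1$ and an equivalent norm $\tndot$ whose boundary $F = \bigcup_n a_n(H_n \setminus H_{n-1})$ inherits \st\ from the finite $H_n$; in particular $\tndot$ is polyhedral. To enforce the $\eps$-approximation, I would exploit the freedom in the choice of $(a_n)$ inside Theorem \ref{renorm-pieces} to insist that every $a_n \in [1, 1+\eps]$. Since then $F \subset (1+\eps)B_{X^*}$ while $\bigcup_n H_n$ already norms $X$ (its norm-closure contains $\bigcup_n K_n$, which in turn contains the original boundary), this should give $\n{x} \leq \tn{x} \leq (1+\eps)\n{x}$, as required.
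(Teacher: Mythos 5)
Your reduction collapses at its central step: it is simply not true that a $\sigma$-$w^*$-LRC, $w^*$-$\sigma$-compact set $E$ can be covered by countably many norm-compact sets. Take $K=\Gamma\cup\{\infty\}$, the one-point compactification of an uncountable discrete set, and $E=\set{\pm\delta_t}{t\in K}\subset B_{C(K)^*}$. Then $E$ is $w^*$-compact and is the union of the two $w^*$-discrete (hence $w^*$-LRC) sets $\set{\pm\delta_t}{t\in\Gamma}$ and $\{\pm\delta_\infty\}$, yet $E$ is norm-discrete with $\n{\delta_s-\delta_t}=2$ for $s\neq t$, so every norm-compact subset of $E$ is finite and $E$ is not norm $\sigma$-compact. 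Your ``driving observation'' is correct only when the $w^*$-compact set $K$ is contained in a \emph{single} $w^*$-LRC set $E'$: then the finite subcover of the local witnesses works. But the pieces $K_m\cap E_n$ produced by your double indexing are $w^*$-LRC without being $w^*$-compact (the $E_n$ need not be $w^*$-closed), and no refinement can help, since in the example above the target conclusion (norm $\sigma$-compactness of $E$) is outright false. Consequently your argument proves only the special case recorded in the paper as Corollary \ref{sigma-compact-boundary}, and misses precisely the applications (e.g.\ $C(K)$ for $K$ $\sigma$-discrete and uncountable) that motivate the theorem. The finite $\eps_n$-nets, and with them the finiteness of the $H_n$ that you use to get both the relative boundary property and {\st} for free, are therefore unavailable.

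The paper's proof has to work much harder exactly here: it partitions $E$ into pieces $L_n$ and extracts from each a maximal $\eps_n$-separated subset $\Gamma_n$, which is an $\eps_n$-net but is in general \emph{infinite}; the $w^*$-LRC hypothesis is used only locally, to guarantee that each $\Gamma_n$ meets every totally bounded set (in particular every suitable $w^*$-neighbourhood of a putative limit point) in a finite set. The {\st} property is then not automatic but is forced by multiplying each $f$ by a carefully designed weight $\psi(f)$ encoding the collection of $w^*$-closures $\cl{L_i}^{w^*}$ containing $f$, so that any $w^*$-limit point $g$ of the weighted set with $\lim\psi(f_\lambda)>1$ satisfies $\tn{g}\leq 1-\eps_n<1$. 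The $w^*$-$\sigma$-compactness is used to keep the $w^*$-limit points inside $E$, not to produce norm compactness. If you want to salvage a Theorem-\ref{renorm-pieces}-based route, you would need to replace ``finite $H_n$'' by relative boundaries admitting a $w^*$-LRC and $w^*$-$\sigma$-compact cover and then still invoke Theorem \ref{boundary} at the end, which is exactly what Corollary \ref{boundary-pieces} does; the theorem itself cannot be bypassed this way.
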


The proof of Theorem \ref{boundary} is deferred to Section \ref{boundary-proof}.
The additional requirement that $E$ in Theorem \ref{boundary} be
$w^*$-$\sigma$-compact is imposed in order to control the
locations of its limit points. We do not know to what extent this
condition can be relaxed in general, if at all.

The next corollary makes use of both Theorem \ref{renorm-pieces}
and Theorem \ref{boundary}.

\begin{corollary}\label{boundary-pieces}
Let $X$ be a Banach space, and let $H_k \subset B_{X^*}$, $k \in
\N$, be an increasing sequence of relative boundaries, such that
each may be covered by a $w^*$-$\sigma$-LRC and
$w^*$-$\sigma$-compact set. Suppose moreover that we can write $S_X$
as a union $\bigcup_{n=1}^{\infty} S_n$, in such a way that the
sequence
\[
b_n \;=\; \inf\set{\sup\set{f(x)}{f \in H_n}}{x \in S_n}
\]
is strictly positive and converges to 1. Then $X$ admits an
equivalent (polyhedral) norm that has a boundary having {\st}.
\end{corollary}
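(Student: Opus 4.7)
My plan is to combine Theorems \ref{renorm-pieces} and \ref{boundary} in sequence. As a first step, I would apply Theorem \ref{renorm-pieces} directly to the given data $(S_n)$ and $(H_n)$: the stated hypotheses on the $S_n$, $H_n$, and $b_n$ are exactly what that theorem requires, so we obtain an equivalent norm $\tndot$ on $X$ together with scalars $a_n > 1$ such that
\[
F \;=\; \bigcup_{n=1}^\infty a_n(H_n \setminus H_{n-1})
\]
is a boundary of the Banach space $(X, \tndot)$. At this stage we do not yet know that $F$ has property \st, since the corollary's hypotheses do not grant \st\ for the individual $H_n$.

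The second step is to feed $F$ into Theorem \ref{boundary}. Let $E_n \supset H_n$ denote a $\sigma$-$w^*$-LRC and $w^*$-$\sigma$-compact covering set supplied by hypothesis, and set $E = \bigcup_{n=1}^\infty a_n E_n \subset X^*$. Scaling by a positive constant is a $w^*$-homeomorphism of $X^*$ that simply rescales the norm, so each $a_n E_n$ inherits the $\sigma$-$w^*$-LRC and $w^*$-$\sigma$-compact properties from $E_n$; since countable unions preserve both properties, $E$ itself is $\sigma$-$w^*$-LRC and $w^*$-$\sigma$-compact. Because $E \supset F$, it contains a boundary of $(X, \tndot)$. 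Applying Theorem \ref{boundary} to the Banach space $(X, \tndot)$ with this set $E$ and any $\eps > 0$ then yields a further equivalent norm $\tndot'$ on $X$ that admits a boundary with property \st, and is therefore polyhedral.

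The principal point requiring care is that Theorem \ref{boundary} is applied with respect to the renormed space $(X, \tndot)$ produced in the first step rather than to the original norm. This is legitimate because Theorem \ref{boundary} is stated for arbitrary Banach spaces, and because equivalent norms on $X$ induce the same $w^*$-topology and the same notion of norm-compactness on $X^*$; consequently the properties of being $\sigma$-$w^*$-LRC, being $w^*$-$\sigma$-compact, and containing a boundary all transfer between renormings without alteration. The remaining verifications—stability of the two classes under positive scaling and countable unions—are immediate from the definitions.
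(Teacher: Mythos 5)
Your proposal is correct and follows essentially the same route as the paper: apply Theorem \ref{renorm-pieces} to obtain the new norm and the boundary $F=\bigcup_n a_n(H_n\setminus H_{n-1})$, then cover $F$ by $\bigcup_n a_n E_n$, which remains $\sigma$-$w^*$-LRC and $w^*$-$\sigma$-compact, and apply Theorem \ref{boundary} to the renormed space. Your explicit remark that these properties and the notion of boundary are unaffected by passing to an equivalent norm is a point the paper leaves implicit, but the argument is the same.
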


\begin{proof}
Using Theorem \ref{renorm-pieces}, we can find suitable
numbers $a_n > 1$, together with an equivalent
norm $\tndot$, with respect to which the set $F =
\bigcup_{n=1}^\infty \pm a_n (H_n\setminus H_{n-1})$ is a boundary.
If $E_n$ is a $w^*$-$\sigma$-LRC and $w^*$-$\sigma$-compact set that
covers $H_n$, then $\bigcup_{k=0}^\infty \pm a_n E_n$ is a
$w^*$-$\sigma$-LRC and $w^*$-$\sigma$-compact set that covers $F$.
Now apply Theorem \ref{boundary}.
\end{proof}

\section{The proof of Theorem \ref{renorm-pieces}}\label{pieces-proof}

We begin by stating a way of gluing together relative boundaries having
{\st}, along the lines of Lemma \ref{lemma4}.

\begin{proposition}\label{prop4ext}
Let $H_n \subset B_{X^*}$ be an increasing sequence of subsets and
$(a_n)_{n=1}^\infty$ a decreasing sequence converging to 1. Set
$H = \bigcup_{n=1}^\infty H_n$ and $F = \pm\bigcup_{n=1}^\infty a_n
(H_n\setminus H_{n-1})$, where $H_0$ is empty. If each $H_n$ has {\st}, and
\[
\sup\set{|h(x)|}{h \in H} < \sup\set{f(x)}{f \in F}
\]
whenever $x$ is non-zero, then $F$ has {\st}.
\end{proposition}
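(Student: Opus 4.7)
The plan is to fix an arbitrary $w^*$-limit point $g$ of $F$ and an $x \in X$ with $\sup_{f\in F}f(x) = 1$ (so $x\ne 0$), and to show $g(x) < 1$. Two preliminary observations drive the proof. First, the $\pm$-symmetry of $F$ combined with $\sup_F f(x) = 1$ reads as $a_m|h(x)| \le 1$ for every $m$ and every $h \in H_m\setminus H_{m-1}$, whence $|h(x)|\le 1/a_m$ for all such $h$. Second, the separation hypothesis provides the strict gap $\alpha := \sup_{h\in H}|h(x)| < 1$.

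The core of the argument is a dichotomy on how $g$ accumulates in the layer decomposition $F = \bigcup_{m\ge 1} A_m$, where $A_m := \pm a_m(H_m\setminus H_{m-1})$. Write $T_N := \bigcup_{m>N} A_m$. Either (Case A) $g$ is a $w^*$-limit point of $T_N$ for every $N$; or (Case B) there is a smallest $N_0$ for which this fails, and then $g$ is a $w^*$-limit point of the finite union $F \setminus T_{N_0}$, hence (by the standard finite-union fact for limit points in Hausdorff spaces) of a single piece $\varepsilon_0 a_{m_0}(H_{m_0}\setminus H_{m_0-1})$ for some sign $\varepsilon_0 \in \{\pm 1\}$ and some $m_0 \le N_0$.

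Case A is a direct estimate that does not need $(*)$: given $\varepsilon > 0$, choose $N$ so that $a_m < 1 + \varepsilon$ for $m > N$; the $w^*$-neighbourhood $\{y^*\in X^*: |y^*(x) - g(x)| < \varepsilon\}$ of $g$ meets $T_N$ in some $f = \pm a_m h \ne g$ with $m > N$, so $|f(x)| \le a_m\alpha < (1+\varepsilon)\alpha$ and hence $|g(x)| < (1+\varepsilon)\alpha + \varepsilon$. Letting $\varepsilon\to 0^+$ gives $|g(x)| \le \alpha < 1$.

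Case B is where property $(*)$ of $H_{m_0}$ is invoked. The point $\varepsilon_0 g/a_{m_0}$ is a $w^*$-limit point of $H_{m_0}\setminus H_{m_0-1} \subseteq H_{m_0}$, and the first preliminary (together with the monotonicity of $(a_m)$) gives $s := \sup_{h \in H_{m_0}} h(\varepsilon_0 x) \le 1/a_{m_0}$. If $s > 0$, a routine rescaling of $(*)$ applied at $\varepsilon_0 x$ yields $(\varepsilon_0 g/a_{m_0})(\varepsilon_0 x) < s \le 1/a_{m_0}$, i.e.\ $g(x) < 1$; if $s \le 0$, evaluating at $\varepsilon_0 x$ a $w^*$-convergent net from $H_{m_0}\setminus H_{m_0-1}$ forces $g(x)/a_{m_0} \le 0$, and again $g(x) < 1$. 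The only step I expect to be genuinely delicate is the tracking of the sign $\varepsilon_0$ in Case B: it is unavoidable because the sets $H_m$ are not assumed symmetric, and it is precisely this that accounts for the $\pm$ appearing in the definition of $F$.
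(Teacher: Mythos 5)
Your proof is correct and follows essentially the same route as the paper's: a dichotomy between $g$ accumulating on a single layer $\pm a_m(H_m\setminus H_{m-1})$ (where {\st} of $H_m$ is invoked after rescaling) and $g$ accumulating only through layers of unbounded index (where $a_n\to 1$ and the strict gap $\sup_{h\in H}|h(x)|<1$ finish the job). The only differences are cosmetic: you make explicit the rescaling, the sign-tracking, and the degenerate case $s\le 0$, which the paper dispatches by assuming $g(x)>0$ and replacing $x$ by $-x$.
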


\begin{proof}
Let $g$ be a $w^*$-limit point of $F$ and suppose that
$\sup\set{f(x)}{f \in F}=1$. First, imagine that $g$ is not a
$w^*$-limit point of $\pm a_n (H_n\setminus H_{n-1})$ for any $n$.
Then it is possible to find sequences $n_k \to \infty$ and
$h_k \in H_{n_k}\setminus H_{{n_k}-1}$,
satisfying
\begin{align*}
|g(x)| \;=\; \lim_k |a_{n_k}h_k(x)| \;=\; \lim_k |h_k(x)| &\;\leq\; \sup\set{|h(x)|}{h \in H}\\
&\;<\; \sup\set{f(x)}{f \in F} \;=\; 1.
\end{align*}
Now suppose that $g$ is a $w^*$-limit point of $a_m (H_m\setminus H_{m-1})$,
for some $m$. Assume that $g(x)$ is positive, otherwise there is
nothing to prove. Since $a_m H_m$ has
{\st}, we obtain
\begin{align*}
0 \;<\; g(x) &\;<\; \sup\set{f(x)}{f \in a_m H_m} \\
&\;\leq\; \sup\set{f(x)}{f \in a_k(H_k\setminus H_{k-1}),\, k \leq m}\quad \text{(as $(a_n)$ is decreasing)} \\
&\;\leq\; \sup\set{f(x)}{f \in F} = 1.
\end{align*}
If $g$ is a $w^*$-limit point of $-a_m (H_m\setminus H_{m-1})$, then repeat the
above using $-x$.
\end{proof}

\begin{proof}[Proof of Theorem \ref{renorm-pieces}]
Let $X$ be a Banach space and suppose that we have $S_n \subset S_X$
and increasing sets $H_n \subset B_{X^*}$, such that $S_X = \bigcup_{n=1}^\infty S_n$ and
the numbers
\[
b_n \;=\; \inf\set{\sup\set{h(x)}{h \in  H_n}}{x \in S_n},
\]
are strictly positive and converge to 1.

Define $H = \bigcup_{n=1}^\infty \pm H_n$ and $n(h)=\min\set{n \in \N}{h \in H_n}$
whenever $h \in H$. Set $c_n = \inf\set{b_m}{m \geq n}$ and $a_n =
(1+2^{-n})c_n^{-1}$. We are going to prove the following

\textbf{Claim }

The seminorms
\[
\n{x}_n \;=\; \sup\set{a_{n(h)}|h(x)|}{h \in H_n}, \quad n \in \N,
\]
and the norm
\[
\tn{x} \;=\; \sup\set{a_{n(h)}|h(x)|}{h \in H},
\]
possess the following properties:

{\renewcommand{\theenumi}{\roman{enumi}}
\begin{enumerate}
\item \label{lemma:1:1} $\n{x} < \tn{x}\leq a_1 \n{x}$ for every
$x\in X\setminus\{0\}$;
\item \label{lemma:1:2} for every $x\in X$, there exists an integer
$n_x$ satisfying $\tn{x}=\n{x}_{n_x}$;
\item \label{lemma:1:3} if each $H_n$ is a relative boundary, then
$F = \bigcup_{n=1}^{\infty}\pm a_n (H_n\setminus H_{n-1})$ is a
boundary with respect to $\tndot$;
\item if each $H_n$ has {\st}, then $F$ also has {\st} and $\tndot$
is polyhedral.
\end{enumerate}}

Proof of (i): Let $x \in X\setminus\{0\}$ and $n \in \N$ such that $x/\n{x}
\in S_n$. Then $c_n \leq \sup\set{h(x/\n{x})}{h \in H_n}$, and since
$n(h) \leq n$ whenever $h \in H_n$, we obtain
\begin{align*}
\n{x} \;\leq\; \frac{1}{c_n}\sup\set{h(x)}{h \in H_n}
&\;\leq\; \frac{1}{a_n c_n}\sup\set{a_{n(h)}|h(x)|}{h \in H_n}\\
&\;=\; \frac{1}{a_n c_n}\n{x}_n \;<\; \n{x}_n \;\leq\; \tn{x} \;\leq\; a_1\n{x}.
\end{align*}

Proof of (ii): Suppose $1=\n{x} < \tn{x}$. Take $N$ such that $\tn{x} >
a_N$. If $h \in H$ and $n(h) \geq N$, then $a_{n(h)}|h(x)| \leq a_N
< \tn{x}$. Therefore, $\tn{x}=\max_{n < N} \n{x}_n$.

Proof of (iii): Let $\tn{x}=\n{x}_n$, where $n\geq 1$ is minimal. If $h
\in H_n$ satisfies $\n{x}_n \geq a_{n(h)}|h(x)| > \n{x}_k$, $k < n$,
then $n(h)=n$. Therefore,
\begin{align*}
\n{x}_n &\;\leq\; a_n\sup\set{|h(x)|}{h \in H_n}\\
&\;\leq\; \sup\set{a_{n(h)}|h(x)|}{h \in H_n} \;=\; \n{x}_n.
\end{align*}
As $H_n$ is a relative boundary, there exists $g \in H_n$ such that
$a_n|g(x)|= a_n\sup\set{|h(x)|}{h \in H_n}=\n{x}_n$. Again, $n(g)=n$
by minimality.

Proof of (iv): Clearly from \eqref{lemma:1:1}, we have $\sup\set{|h(x)|}{h \in H} \leq \n{x}
< \tn{x}=\sup\set{f(x)}{f \in F}$, so we can apply Proposition
\ref{prop4ext} to conclude that $F$ has {\st}. According to \cite[Proposition 6.11]{FLP},
$\tndot$ is polyhedral.
\qedhere
\end{proof}

In the proof above, we can use any decreasing sequence
$(a_n)_{n=1}^\infty$ satisfying $a_n c_n > 1$ and $a_n \to 1$.

As we mentioned in the Introduction, the hypotheses of Theorem \ref{renorm-pieces}
for the existence of polyhedral renormings are also necessary in the
case of separable Banach spaces. This follows from the results regarding
the approximation of convex bodies by polytopes obtained in \cite{DFH}.

\begin{proposition}\label{nec-condition}
Let $(X,\ndot)$ be a separable Banach space that has an equivalent norm with a
countable boundary (isomorphically polyhedral). Then, for every $\eps >0$, there exists a
sequence of subsets $F_n\subset B_{X^*}$ having {\st}, such that
the sequence
\[
b_n \;=\; \inf \set{\sup\set{f(x)}{f \in F_n}}{x \in S_X},
\]
satisfies $b_n>1-\eps$ for every $n\in\N$, and $\lim_n b_n=1$.
\end{proposition}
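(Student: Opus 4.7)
The plan is to use the approximation of convex bodies by polytopes proved in \cite{DFH}, combined with Fonf's characterization \cite{fonf2} of polyhedral norms on separable Banach spaces via countable $1$-norming subsets of the dual sphere having {\st}. Each polyhedral norm approximating $\ndot$ from below will furnish one of the sets $F_n$.

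Fix a sequence $\delta_n \in (0,\eps)$ with $\delta_n \to 0$. Since $(X,\ndot)$ is separable and isomorphically polyhedral, the approximation theorem of \cite{DFH}, applied to the unit ball $B_X$, yields for each $n$ an equivalent polyhedral norm $p_n$ on $X$ satisfying $(1-\delta_n)\n{x}\le p_n(x) \le \n{x}$ for every $x \in X$; after a harmless rescaling we may take the inequalities in this form.

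Next, apply Fonf's characterization \cite{fonf2} to each polyhedral norm $p_n$ on the separable space $X$:\ there exists a countable set $F_n \subset X^*$, contained in the $p_n$-dual unit sphere, that is $1$-norming for $p_n$ and has property {\st}. Since $p_n \le \ndot$ as norms on $X$, the dual inequality gives $\n{f} \le p_n^*(f) = 1$ for each $f \in F_n$, so $F_n \subset B_{X^*}$. Property {\st} depends only on the $w^*$-topology on $X^*$ and the duality pairing with $X$, so it persists when $F_n$ is regarded as a subset of $B_{X^*}$. For $x \in S_X$ we then have $\sup\set{f(x)}{f\in F_n} = p_n(x) \ge 1-\delta_n$, whence $b_n \ge 1-\delta_n > 1-\eps$ and $b_n \to 1$.

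The main obstacle is the quantitative approximation statement itself: it is precisely here that the hypothesis of isomorphic polyhedrality is used, via \cite{DFH}, to produce polyhedral norms $p_n$ with approximation constants tending to $1$. Once such $p_n$ are in hand, the transfer back to the original norm is essentially formal via Fonf's characterization.
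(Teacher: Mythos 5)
Your proposal is correct and follows essentially the same route as the paper: both invoke the polytope approximation theorem of \cite{DFH} to produce polyhedral norms (equivalently, polytopes $P_\eta$ with $B_X\subset P_\eta\subset(1+\eta)B_X$) whose approximation constants tend to $1$, and then use Fonf's result \cite{fonf2} to equip each with a countable boundary having {\st}, which after the duality observation lands in $B_{X^*}$ and gives $b_n\ge 1-\delta_n$. The paper phrases this in terms of the polytopes and their polars rather than the Minkowski functionals $p_n$ and their dual norms, but the argument is the same.
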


\begin{proof}
In \cite[Theorem 1.1]{DFH}, it is proved that if $(X,\ndot)$ admits an equivalent
polyhedral norm, then for every $\eta >0$, there exists a polytope $P_\eta$ that
$\eta$-approximates the unit ball $B_X$ with respect to the norm $\ndot$, i.e.\
$B_X\subset P_\eta \subset (1+\eta)B_X$. Clearly $P_{\eta}$ has a countable boundary $F_\eta$,
$F_\eta \subset P_\eta^\circ \subset B_{X^*}$. Also, we can assume that $F_\eta$
has {\st} and, in particular, if $x\in S_X$, then $1\geq \sup\set{f(x)}{f \in F_\eta}\geq (1+\eta)^{-1}$.

If we choose a sequence $0<\eps_n<\eps<1$ that converges to 0, and we
 put $\eta_n = (1-\eps_n)^{-1}-1 >0$ in the
previous argument,  we get a sequence of sets $F_n\subset B_{X^*}$ having {\st} that
verifies the statement of the proposition,
\[
1\;\geq\; b_n \;=\; \inf\set{\sup\set{f(x)}{f \in F_n}}{x \in S_X} \;\geq\; 1-\eps_n\;>\;0. \qedhere
\]
\end{proof}

\section{The proof of Theorem \ref{boundary}}\label{boundary-proof}

The notion of $w^*$-LRC set given in the Introduction can be cast in
the more general context of sets endowed with a pair of topologies.

\begin{definition}\label{lrc}
Let $X$ be a set and let $\tau$ and $\rho$ be two Hausdorff topologies on $X$,
with $\rho$ finer than $\tau$. We say that $E \subset X$ is $\tau$-locally
relatively $\rho$-compact (or $(\tau,\rho)$-LRC for short), if given
$x \in E$, there exists a $\tau$-open set $U \subset X$, such that
$x \in U$ and $\cl{E \cap U}^{\rho}$ is $\rho$-compact.
\end{definition}

Before presenting the proof of Theorem \ref{boundary} and seeing how
this concept helps in the theory of isomorphic polyhedrality, we
explore some of its general consequences.

\begin{proposition}\label{lrctheory}
Let $X$, $\tau$ and $\rho$ be as in Definition \ref{lrc}.
\begin{enumerate}
\item If $(X,\tau)$ is a Baire space and $\cl{U}^\rho$ is not $\rho$-compact
whenever $U$ is $\tau$-open and non-empty, then $X$ is not the union of
countably many $(\tau,\rho)$-LRC subsets.
\item If $\rho$ is metrizable (with metric also denoted by $\rho$), then
any $(\tau,\rho)$-LRC set $E$ has small local diameter, i.e., given $x \in E$
and $\eps>0$, there exists a $\tau$-open subset $U \subset X$, such
that $x \in E \cap U$ and $\rho$-$\diam{E \cap U} < \eps$.
\item If $E$ is $(\tau,\rho)$-LRC then there exists a $\tau$-open set
$V$, such that $E \subset \cl{E}^\tau \cap V \subset \cl{E}^\rho$,
and $\cl{E}^\tau \cap V$ is also $(\tau,\rho)$-LRC.
\end{enumerate}
\end{proposition}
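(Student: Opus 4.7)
The three parts all hinge on a single observation, which I would establish first: since $\rho$ is finer than $\tau$, every $\rho$-compact set is $\tau$-compact, and since $\tau$ is Hausdorff, $\tau$-closed as well. Consequently, whenever $E$ is $(\tau,\rho)$-LRC and $U$ is a $\tau$-open witness at a point (so $\cl{E \cap U}^{\rho}$ is $\rho$-compact), one has the identity $\cl{E \cap U}^{\tau} = \cl{E \cap U}^{\rho}$: the right-hand side is $\tau$-closed and contains $E \cap U$, while the reverse inclusion is immediate from $\rho$ refining $\tau$. This \emph{key identity} converts $\tau$-density information into $\rho$-closure information and drives all three arguments.

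For part (1), I would argue by contradiction. Suppose $X = \bigcup_{n=1}^{\infty} E_{n}$ with each $E_{n}$ being $(\tau,\rho)$-LRC. Since $(X,\tau)$ is Baire and $X = \bigcup_{n} \cl{E_{n}}^{\tau}$, some $\cl{E_{n}}^{\tau}$ has non-empty $\tau$-interior $V$, and then $V \cap E_{n}$ is $\tau$-dense in $V$. Pick $x \in V \cap E_{n}$, let $U \ni x$ be a $\tau$-open LRC witness, and put $W = U \cap V$. Then $E_{n} \cap W$ is $\tau$-dense in $W$, so $W \subset \cl{E_{n} \cap W}^{\tau}$, which by the key identity equals $\cl{E_{n} \cap W}^{\rho} \subset \cl{E_{n} \cap U}^{\rho}$. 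Hence $\cl{W}^{\rho}$ is $\rho$-compact, contradicting the hypothesis. The main obstacle here is that Baire only yields $V \subset \cl{E_{n}}^{\tau}$, not $V \subset E_{n}$; one must combine a single LRC witness with $\tau$-density, and the bridge is the key identity.

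For part (2), fix $x \in E$ and $\eps > 0$, and let $U$ be a $\tau$-open witness at $x$. The set $K = \cl{E \cap U}^{\rho} \setminus B_{\rho}(x, \eps/2)$ is $\rho$-closed in the $\rho$-compact set $\cl{E \cap U}^{\rho}$, hence $\rho$-compact and therefore $\tau$-compact. Since $\tau$ is Hausdorff and $x \notin K$, separate $x$ from $K$ by a $\tau$-open neighbourhood $U_{1}$ of $x$ disjoint from $K$. Then $W = U \cap U_{1}$ is a $\tau$-open neighbourhood of $x$ satisfying $E \cap W \subset (E \cap U) \setminus K \subset B_{\rho}(x, \eps/2)$, giving $\rho$-$\diam(E \cap W) \leq \eps$.

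For part (3), pick a $\tau$-open LRC witness $U_{x} \ni x$ for each $x \in E$ and set $V = \bigcup_{x \in E} U_{x}$, so $E \subset V$. Given $y \in \cl{E}^{\tau} \cap V$, choose $x$ with $y \in U_{x}$; for every $\tau$-open $W \ni y$ the set $W \cap U_{x}$ is a $\tau$-neighbourhood of $y$ meeting $E$, hence meeting $E \cap U_{x}$, so $y \in \cl{E \cap U_{x}}^{\tau} = \cl{E \cap U_{x}}^{\rho} \subset \cl{E}^{\rho}$. This gives the sandwich $E \subset \cl{E}^{\tau} \cap V \subset \cl{E}^{\rho}$. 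The same $U_{x}$ shows $\cl{E}^{\tau} \cap V$ is $(\tau,\rho)$-LRC, since by the same argument $(\cl{E}^{\tau} \cap V) \cap U_{x} \subset \cl{E \cap U_{x}}^{\rho}$, whose $\rho$-closure is a $\rho$-closed subset of the $\rho$-compact set $\cl{E \cap U_{x}}^{\rho}$ and hence $\rho$-compact.
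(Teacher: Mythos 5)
Your proposal is correct and follows essentially the same route as the paper: all three parts rest on the identity $\cl{E \cap U}^{\tau} = \cl{E \cap U}^{\rho}$ (via $\rho$-compact $\Rightarrow$ $\tau$-compact $\Rightarrow$ $\tau$-closed), part (1) is the Baire/nowhere-dense argument, and part (3) is the same union-of-witnesses construction. The only cosmetic differences are that in (2) you separate $x$ from a compact set directly instead of observing that $\tau$ and $\rho$ coincide on $\cl{E\cap U}^{\rho}$ (and your bound gives diameter $\leq\eps$ rather than $<\eps$, fixed by starting from $\eps/3$), neither of which changes the substance.
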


\begin{proof}$\;$
\begin{enumerate}
\item It is enough to show that if $E\subset X$ is $(\tau,\rho)$-LRC,
then $E$ is $\tau$-nowhere dense. Indeed, suppose that $U\neq\varnothing$
is $\tau$-open and $U \subset \cl{E}^\tau$.
Given $y \in E \cap U$, we can find a $\tau$-open set $V \subset U$ such that
$y \in V$ and $\cl{E \cap V}^{\rho}$ is $\rho$-compact. Since
$\rho$ is Hausdorff and finer than $\tau$, it follows that $\cl{E \cap V}^{\rho}$
is $\tau$-closed, thus
\[
V \;=\; \cl{E}^\tau \cap V \;\subset\; \cl{E \cap V}^\tau \;=\; \cl{E \cap V}^\rho,
\]
and $\cl{V}^\rho = \cl{E \cap V}^\rho$ is $\rho$-compact, contrary to assumption.
\item Let $x \in E$ and let $\cl{E \cap U}^\rho$ be $\rho$-compact, where
$U$ is $\tau$-open and $x \in U$. As above, $\cl{E \cap U}^\rho = \cl{E \cap U}^\tau$.
Since the identity map from $(\cl{E \cap U}^\rho,\rho)$ to $(\cl{E \cap U}^\tau,\tau)$
is continuous and bijective, it follows that $\rho$ agrees with $\tau$ on this set.
In particular, if $W \subset E \cap U$ is $\rho$-relatively open, then there exists
a $\tau$-open subset $V \subset X$ such that $W = E \cap V$. The result follows.
\item Define
\[
V \;=\; \bigcup\set{U \subset X}{\text{$U$ is $\tau$-open
and $\cl{E \cap U}^\rho$ is $\rho$-compact}}.
\]
Certainly, $E \subset \cl{E}^{\tau} \cap V$. Moreover,
if $\cl{E \cap U}^\rho$ is $\rho$-compact then
$\cl{E}^{\tau} \cap U \subset \cl{E \cap
U}^{\tau} =  \cl{E \cap U}^{\rho} \subset
\cl{E}^{\rho}$, so $\cl{E}^{\tau} \cap V
\subset \cl{E}^{\rho}$. The inclusions in the previous
sentence also show that $\cl{E}^{\tau} \cap V$ is
$(\tau,\rho)$-LRC. \qedhere
\end{enumerate}
\end{proof}

From Proposition \ref{lrctheory} (1), it follows immediately that if
$X$ is infinite-dimensional and $W \subset X$ has a non-empty norm
interior, then it cannot be covered by countably many
$(w,\ndot)$-LRC sets (any $(w,\ndot)$-LRC set is also
$(\ndot,\ndot)$-LRC, of course). Likewise if we consider dual Banach
spaces in the $w^*$-topology. Thus, according to Proposition
\ref{lrctheory} (2), the notion of $(\tau,\rho)$-LRC sets is
strictly stronger than that of sets having small local diameter. Indeed,
consider any Banach space $X$ with an equivalent Kadec norm
\cite[Theorem 2.1]{jnr:92}.

However, while it isn't possible to cover $X^*$ with countably many
$w^*$-LRC sets (assuming $\dim X  = \infty$), it is sometimes
possible to cover {\em boundaries} with countably many such sets.

The proof of Theorem \ref{boundary} combines two ideas. The first
(the use of a function of type $\psi$, as below)
originates in \cite[Theorem 5.1]{hh:07} and matures in \cite[Theorem
11]{FPST} and \cite[Lemma 3]{st:10}. The second idea is based on
the use of finite $\eps$-nets, which are employed in this, and
related contexts, in e.g.\ \cite[Theorem 24]{FPST}, \cite{fonf3} and
\cite{H1}.

\begin{proof}[Proof of Theorem \ref{boundary}]
Let $E = \bigcup_{n=0}^\infty E_n$, where each $E_n$ is $w^*$-LRC.
We can assume that $E$ is a boundary, and that $\cl{E_n}^{w^*}
\subset E$ for all $n \in \N$. Indeed, if necessary, consider
\[
E_n \cap K_m \cap B_{X^*}
\]
for $n,\, m \in \N$, where $E = \bigcup_{m=0}^\infty K_m$ and each
$K_m$ is $w^*$-compact.

By Proposition \ref{lrctheory} (3), there exist $w^*$-open sets $V_n$, such
that if we set $A_n = \cl{E_n}^{w^*} \cap V_n$, then $E_n
\subset A_n \subset \cl{E_n}^{\ndot}$ and $A_n$ is
$w^*$-LRC. We can see that each $A_n$ is both norm $F_\sigma$
and norm $G_\delta$, so write
\[
A_n\setminus\bigcup_{k<n} A_k \;=\; \bigcup_{m=0}^\infty H_{n,m},
\]
where each $H_{n,m}$ is norm closed and $H_{n,m}\subset H_{n,m+1}$
for all $m \in \N$. For convenience, set $H_{n,-1} = \varnothing$.
Let $\pi:\N^2 \to \N$ be a bijection and define
\[
L_{\pi(i,j)} \;=\; H_{i,j}\setminus H_{i,j-1},
\]
for $i,\, j \in \N$. It is clear that $E$ is the disjoint union of
the $L_n$, and that $\cl{L_n}^{w^*} \subset
\cl{E_p}^{w^*} \subset E$, where $n=\pi(p,q)$.

Fix $\eps>0$. Given $f \in E$, let
\[
I(f) \;=\; \set{n \in \N}{f \in \cl{L_n}^{w^*}}.
\]
and $n(f) = \min I(f)$. Define a function
$\psi:E \to (1,1+\eps)$ by
\[
\psi(f) = 1 + {\textstyle \frac{1}{2}}\eps\cdot 2^{-n(f)}\left(1 +
{\textstyle \frac{1}{4}}\sum_{i \in I(f)} 2^{-i} \right).
\]
Set $\eps_n = \frac{1}{160}\eps\cdot 4^{-n}$. Fix $n$ for a moment.
Because $\psi(L_n) \subset (1, 1+\eps)$, there is a finite partition
of $L_n$ into sets $J$, such that $\diam{\psi(J)} \leq \eps_n$. For
each such $J$, we can use Zorn's Lemma to extract a
$\eps_n$-separated subset $\Gamma$ of $J$ that is maximal. The
maximality guarantees that $\Gamma$ is a $\eps_n$-net, while the
$\eps_n$-separation implies that if $M \subset J$ is totally
bounded, then $M \cap \Gamma$ is finite. Therefore, by considering
the finite union of these $\eps_n$-separated subsets, there exists a
subset $\Gamma_n$ of $L_n$, with the property that if $f \in L_n$,
then we can find $h \in \Gamma_n$ satisfying
\begin{equation}\label{eqn1}
|\psi(f)-\psi(h)|,\, \n{f-h} \;\leq\; \eps_n.
\end{equation}
Moreover, if $M \subset L_n$ is totally bounded, then $M \cap
\Gamma_n$ is finite.

Now define $B = \bigcup_{n=0}^\infty \Gamma_n$, $D =
\set{\psi(f)f}{f \in B}$ and
\[
\tn{x} \;=\; \sup\set{\psi(f)f(x)}{f \in B}.
\]

\textbf{Claim 1}

If $x \neq 0$, then $\n{x} < \tn{x} \leq (1 + \eps)\n{x}$.

Obviously $\tn{x} \leq (1 + \eps)\n{x}$, because $\psi \leq 1
+ \eps$ and $B \subset E$. Now let $\n{x} = 1$. Since $E$ is
assumed to be a boundary, take $k \in E$ satisfying $k(x) = 1$.
There is a unique $n$ such that $k \in L_n$. It follows that we can
find $f \in \Gamma_n$ such that $\n{k-f} \leq \eps_n$. Now
\begin{align*}
1 = k(x) &= f(x) + (k-f)(x)\\
&= \psi(f)f(x) + (1-\psi(f))f(x) + (k-f)(x)\\
&\leq \tn{x} + (1-\psi(f))f(x) + (k-f)(x),
\end{align*}
so we are done if we can show that $(\psi(f)-1)f(x) + (f-k)(x) > 0$.
Now $\psi(f) \geq 1 + \frac{1}{2}\eps\cdot 2^{-n(f)}$ and $n\geq
n(f)$. Certainly,
\[
f(x) \;\geq\; 1 - \n{k-f}\n{x} \geq {\textstyle \frac{1}{2}},
\]
therefore
\[
(\psi(f)-1)f(x) + (f-k)(x) \;\geq\; {\textstyle \frac{1}{4}}\eps\cdot
2^{-n(f)} - \eps_n \;\geq\; {\textstyle \frac{1}{4}}\eps\cdot 2^{-n} -
{\textstyle \frac{1}{160}}\eps\cdot 4^{-n} \;>\; 0,
\]
which completes the proof of the claim.

Obviously, $D$ is $1$-norming with respect to $\tndot$. It
remains to show that $D$ has {\st}. To this end, fix $g \in
D^\prime$ and $x\in X$ satisfying $\tn{x} = 1$. We must prove
that $g(x) < 1$. Let $(f_\lambda) \subset B$ be a net satisfying
$\psi(f_\lambda)f_\lambda \stackrel{w^*}{\to} g$ and
$\psi(f_\lambda)f_\lambda \neq g$. By taking a subnet if necessary,
again denoted $(f_\lambda)$, we can assume that $\psi(f_\lambda) \to
\alpha$, for some $\alpha \geq 1$. It follows that $f_\lambda
\stackrel{w^*}{\to} g/\alpha$. If $\alpha=1$ then
\[
g(x) \;\leq\; \sup\set{f(x)}{f \in B} \;\leq\; \n{x} \;<\;
\tn{x} \;=\; 1,
\]
so it remains to tackle the case when $\alpha > 1$.

\textbf{Claim 2}

If $\alpha > 1$ then there exists $n \in \N$ such that
$\tn{g} \leq 1 - \eps_n < 1$. In particular, this will yield
$g(x)<1$.

We prove this statement using a number of subclaims. Fix $N$ large
enough so that $1+\eps\cdot 2^{-N} < \frac{1}{2}(1+\alpha)$. If $f
\in E$, then $\psi(f) \leq 1 + \eps\cdot 2^{-n(f)}$, thus we must
have $n(f_\lambda) < N$ for large enough $\lambda$. It follows that
$f_\lambda \in \bigcup_{k<N} \cl{L_k}^{w^*}$ for large enough
$\lambda$. By $w^*$-compactness, $g/\alpha \in \bigcup_{k<N}
\cl{L_k}^{w^*} \subset E$. Hereafter, fix $f = g/\alpha$.

By extracting another subnet, we can assume that $f_\lambda \neq f$.
Indeed, note that given any $\lambda_0$, there exists $\lambda \geq
\lambda_0$ satisfying $f_\lambda \neq f$. Otherwise, we would have
$\psi(f)f =  \psi(f_\lambda)f_\lambda \stackrel{w^*}{\to} g$ for
large enough $\lambda$, giving $\psi(f_\lambda)f_\lambda = g$,
contrary to assumption.

From now on, fix the unique $n$ such that $f \in L_n$, and let
\[
J \;=\; I(f) \cup \set{m \in \N}{m \geq n+2}.
\]
Clearly, $n \in I(f)$. Fix $(p,q) \in \N^2$ such that $n=\pi(p,q)$.
We have $L_n \subset A_p$. Since $A_p$ is $w^*$-LRC, there exists a
$w^*$-open set $U$ containing $f$, such that $A_p \cap U$ is
relatively norm compact.

From the discussion above, $\Gamma_{\pi(p,k)} \cap U$ is finite for
all $k \in \N$, since $\Gamma_{\pi(p,k)} \subset A_p$. This
observation, along with the fact that $\N\setminus J$ is finite,
means that the set
\[
V \;=\; U\setminus\left(\bigcup_{i \in \N\setminus J} \cl{L_i}^{w^*}
\cup \left(\bigcup_{k=0}^q \Gamma_{\pi(p,k)}\setminus\{f\}
\right)\right)
\]
is $w^*$-open. Moreover, because $f \notin \bigcup_{i \in
\N\setminus J} \cl{L_i}^{w^*}$, we have $f \in V$. We
will assume, from now on, that $f_\lambda \in V$.

\textbf{Claim 2a}

$n \notin I(f_\lambda)$.

If $n \in I(f_\lambda)$, then
\[
f_\lambda \in \cl{L_n}^{w^*} \cap V
\;\subset\; \cl{L_n \cap V}^{w^*} \;=\;
\cl{L_n \cap V}^{\ndot} \;\subset\; \cl{L_n}^{\ndot} \;\subset\; H_{p,q}.
\]
It follows that $f_\lambda \in H_{p,k}\setminus H_{p,k-1} =
L_{\pi(p,k)}$ for some $k \leq q$. On the other hand, $f_\lambda \in
B$, so $f_\lambda \in L_{\pi(p,k)} \cap B = \Gamma_{\pi(p,k)}$.
However, this cannot be the case, since $f_\lambda \in
V\setminus\{f\}$.

\textbf{Claim 2b}

$I(f_\lambda) \subset J$.

Let $i \in I(f_\lambda)$. If $i \notin J$, then $f_\lambda \in
\bigcup_{j \in \N\setminus J} \cl{L_j}^{w^*}$, but this runs
contrary to the fact that $f_\lambda \in V$.

\textbf{Claim 2c}

$\psi(f) - \psi(f_\lambda) \geq {\textstyle \frac{1}{16}}\eps\cdot
4^{-n} = 10\eps_n$.

First, note that $n(f_\lambda) \geq n(f)$, by Claim 2b and the fact
that $n(f) = \min I(f) = \min J$. There are two cases to consider.
If $n(f_\lambda) > n(f)$, then
\begin{align*}
\psi(f) - \psi(f_\lambda) &\geq 1 + {\textstyle
\frac{1}{2}}\eps\cdot 2^{-n(f)} - (1 + {\textstyle
\frac{3}{4}}\eps\cdot 2^{-n(f_\lambda)}) \\
&\geq {\textstyle \frac{1}{8}}\eps\cdot 2^{-n(f)} \;\geq\;
{\textstyle \frac{1}{8}}\eps\cdot 2^{-n},
\end{align*}
as $n(f)\leq n$.

Instead, if $n(f_\lambda)=n(f)$, then
\begin{align*}
\psi(f) - \psi(f_\lambda) &\geq {\textstyle \frac{1}{8}}\eps\cdot
2^{-n(f)}\left(\sum_{i \in I(f)} 2^{-i} - \sum_{i \in I(f_\lambda)}
2^{-i} \right)\\
&= {\textstyle \frac{1}{8}}\eps\cdot 2^{-n(f)}\left(\sum_{i \in
I(f)\setminus I(f_\lambda)} 2^{-i} - \sum_{i \in
I(f_\lambda)\setminus I(f)} 2^{-i} \right)\\
&\geq {\textstyle \frac{1}{8}}\eps\cdot 2^{-n(f)}\left(2^{-n} -
\sum_{i
\in J\setminus I(f)} 2^{-i} \right) \qquad\text{(Claims 2a and 2b)}\\
&\geq {\textstyle \frac{1}{8}}\eps\cdot 2^{-n(f)}\cdot 2^{-n-1}
\geq {\textstyle \frac{1}{16}}\eps\cdot 4^{-n}.
\end{align*}
Now we are in a position to finish the proof of Claim 2. By
(\ref{eqn1}), choose $h \in \Gamma_n$ such that
\[
|\psi(f)-\psi(h)|,\, \n{f-h} \;\leq\; \eps_n.
\]

We have $g = \alpha f$, so
\[
g(x) \;=\; \alpha f(x) \leq \alpha h(x) + \alpha \eps_n \n{x}
\;\leq\; \alpha h(x) + 2\eps_n,
\]
since $\alpha \leq 1 + \eps < 2$ and $\n{x} < \tn{x} = 1$.

If $h(x) \leq \frac{1}{3}$, then $g(x) \leq 2(\frac{1}{3}+
\frac{1}{160}) = \frac{163}{240} \leq 1 - \eps_n$. Instead, suppose
that $h(x) > \frac{1}{3}$. From Claim 2c, we have $\psi(f) - \alpha
\geq 10\eps_n$, so $\psi(h) \geq 9\eps_n$. It follows that
\begin{align*}
g(x) &\leq \psi(h)h(x) + (\alpha - \psi(h))h(x) + 2\eps_n\\
&\leq 1 + (\alpha - \psi(h))h(x) + 2\eps_n \qquad\text{($\tn{x}
= 1$ and $h \in B$)}\\
&\leq 1 - 3\eps_n  + 2\eps_n\\
&= 1 - \eps_n.
\end{align*}
As $n$ was obtained independently of $x$, we have
$\tn{g} \leq 1 - \eps_n$.
\end{proof}

\section{Applications and Examples}\label{non-sep}

First of all, we get the following result proved in \cite{fonf3}.

\begin{corollary}[\cite{fonf3}]\label{sigma-compact-boundary}
If $(X,\ndot)$ admits a boundary contained in a norm $\sigma$-compact
subset of $B_{X^*}$, then given $\eps>0$, $X$ admits a
(polyhedral) norm $\tndot$ on $X$ that $\eps$-approximates the
original norm and admits a boundary having {\st}.
\end{corollary}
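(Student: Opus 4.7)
The plan is to observe that the corollary is essentially a direct application of Theorem \ref{boundary}: one only has to verify that a norm $\sigma$-compact subset $E$ of $B_{X^*}$ satisfies the two topological hypotheses of that theorem, namely that $E$ is both $\sigma$-$w^*$-LRC and $w^*$-$\sigma$-compact.

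First I would write $E = \bigcup_{n=1}^\infty K_n$, where each $K_n$ is norm-compact. For the $w^*$-$\sigma$-compactness, the key point is that the norm topology on $X^*$ is finer than the $w^*$-topology, so each $K_n$, being norm-compact, is automatically $w^*$-compact; hence $E$ is $w^*$-$\sigma$-compact as required.

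For the $\sigma$-$w^*$-LRC property, I would show that each $K_n$ is itself $w^*$-LRC in the sense of Definition \ref{banach-lrc}. Given $y \in K_n$, simply take $U = X^*$, which is trivially $w^*$-open and contains $y$. Then $\cl{K_n \cap U}^{\ndot} = K_n$ is norm-compact by assumption. Therefore $E = \bigcup_{n=1}^\infty K_n$ is a countable union of $w^*$-LRC sets, i.e., $\sigma$-$w^*$-LRC.

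Having verified both hypotheses of Theorem \ref{boundary} for the set $E$ (which contains a boundary of $X$ by assumption), the conclusion follows immediately from that theorem: for any given $\eps>0$, there exists an equivalent norm $\tndot$ on $X$ that $\eps$-approximates $\ndot$ and that admits a boundary having property {\st}, and in particular is polyhedral. There is no real obstacle here beyond matching the hypotheses; the entire content of the corollary is packaged into the heavier machinery of Theorem \ref{boundary}.
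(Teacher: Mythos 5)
Your proof is correct and follows exactly the paper's route: the paper's own proof is the one-line citation ``Theorem \ref{boundary} and Example \ref{weakstarlrcex} (1)'', and your verification that each norm-compact piece $K_n$ is $w^*$-LRC (via $U=X^*$) and $w^*$-compact is precisely the content of that example applied to the decomposition $E=\bigcup_n K_n$.
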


\begin{proof}
Theorem \ref{boundary} and Example \ref{weakstarlrcex} (1).
\end{proof}

Of course, the corollary above includes the case when the boundary of
$X$ is countable \cite{fonf2}.

Next, we move on to spaces with bases. In Example \ref{weakstarlrcex}, we
showed how $w^*$-LRC sets can manifest in the duals of spaces having strong M-bases.
Using sets of this kind, together with Corollary \ref{boundary-pieces}, we obtain
Corollary \ref{uncbasissplitting} which contains, as a particular case, the
polyhedral renorming of spaces having unconditional basis given in \cite[Theorem 24]{FPST}.

\begin{corollary}\label{m-basissplitting}
Let $X$ have a strong M-basis $\{x_i,x_i^*\}_{i \in I}$, and suppose
that we can write $S_X = \bigcup_{n=1}^\infty S_n$, in such a way
that the
\[
b_n \;=\; \inf\set{\sup\set{f(x)}{f \in B_{X^*},\, \card(\supp (f))
\leq n}}{x \in S_n}
\]
are strictly positive and converge to $1$. Then $X$ admits an
equivalent (polyhedral) norm that supports a boundary having {\st}.
If $I=\N$ and
\[
b_n \;=\; \inf\set{\sup\set{f(x)}{f \in B_{X^*},\, \max(\supp(f))
\leq n}}{x \in S_n}
\]
behave likewise, then we obtain a norm as above, supporting a countable
boundary having {\st}.
\end{corollary}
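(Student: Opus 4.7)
The plan is to apply Corollary \ref{boundary-pieces} to the sets
\[
H_n \;=\; \set{f \in B_{X^*}}{\card(\supp(f)) \leq n}, \qquad n \in \N,
\]
which are manifestly increasing, and for which the hypothesis on the numbers $b_n$ is precisely the one assumed in the corollary. The outstanding tasks are to confirm that each $H_n$ is a relative boundary and can be covered by a $\sigma$-$w^*$-LRC and $w^*$-$\sigma$-compact set.

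The pivotal step will be to show that $H_n$ is $w^*$-compact. I would argue that if a net $(f_\lambda) \subset H_n$ converges $w^*$ to some $f$ and $\supp(f)$ contained $n+1$ distinct indices $i_1,\ldots,i_{n+1}$, then eventually $f_\lambda(x_{i_j}) \neq 0$ for every $j$, forcing $\card(\supp(f_\lambda)) \geq n+1$, a contradiction. Thus $H_n$ is $w^*$-closed in $B_{X^*}$, hence $w^*$-compact, so $w^*$-continuity of evaluations immediately makes it a relative boundary. Writing
\[
H_n \;=\; \bigcup_{k=0}^n \set{f \in B_{X^*}}{\card(\supp(f)) = k}
\]
and applying Example \ref{weakstarlrcex} (2) to each piece shows that $H_n$ is $\sigma$-$w^*$-LRC, so Corollary \ref{boundary-pieces} delivers the first assertion.

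For the second assertion I would use the analogous family $\tilde H_n = \set{f \in B_{X^*}}{\max(\supp(f)) \leq n}$. The strong M-basis property forces any $f \in \tilde H_n$ to equal $\sum_{i=1}^{n} f(x_i) x_i^*$, so $\tilde H_n$ lies inside the finite-dimensional subspace $\s\set{x_i^*}{1 \leq i \leq n}$ and is norm-compact; Example \ref{weakstarlrcex} (1) then gives $w^*$-LRC and Corollary \ref{boundary-pieces} applies as before. To upgrade to a \emph{countable} boundary I would revisit the proof of Theorem \ref{boundary} when used (inside Corollary \ref{boundary-pieces}) on the covering set $E = \bigcup_n \pm a_n \tilde H_n$:\ because every piece is norm-compact, each $L_n$ constructed there is totally bounded, so each maximal $\eps_n$-separated set $\Gamma_n$ is finite, and the resulting boundary $D = \set{\psi(f)f}{f \in \bigcup_n \Gamma_n}$ is countable. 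The main pinch point I anticipate is the $w^*$-closedness of $H_n$ in the first part; the rest is either a direct citation of Example \ref{weakstarlrcex} or careful bookkeeping inside the construction of Theorem \ref{boundary}.
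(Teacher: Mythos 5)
Your proposal is correct and follows essentially the same route as the paper: it reduces to Corollary \ref{boundary-pieces} with $H_n = \set{f \in B_{X^*}}{\card(\supp(f)) \leq n}$, using that $H_n$ is $w^*$-compact (hence a relative boundary and $w^*$-$\sigma$-compact) and a finite union of $w^*$-LRC sets via Example \ref{weakstarlrcex} (2), and in the second case exploits norm-compactness of $\set{f \in B_{X^*}}{\max(\supp(f)) \leq n}$ to make the nets $\Gamma_n$ from the proof of Theorem \ref{boundary} finite, yielding a countable boundary. The only difference is that you spell out the (correct) net argument for the $w^*$-closedness of $H_n$, which the paper leaves implicit.
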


\begin{proof}
From Example \ref{weakstarlrcex} (2), we know that
\[
H_n \;=\; \set{f \in B_{X^*}}{\card(\supp(f)) \leq n}
\]
is a finite union of $w^*$-LRC sets. Moreover, each $H_n$ is a
relative boundary because it is $w^*$-compact. Now
apply Corollary \ref{boundary-pieces}. In the second case, consider
the norm-compact sets $\set{f \in B_{X^*}}{\max(\supp(f)) \leq n}$.
Observe that in this totally bounded situation, the sets $\Gamma_n$ obtained
in the proof of Theorem \ref{boundary} are necessarily finite,
and thus the boundary constructed will be countable.
\end{proof}

We note that it is possible to obtain the second statement of the result
above without resorting to the heavy machinery of Theorem \ref{boundary}.

Given a space $X$ with an M-basis $\{x_i,x_i^*\}_{i \in I}$, and a finite
subset $\sigma \subseteq I$, let $P_\sigma$ denote the projection given by
$P_\sigma(x) = \sum_{i \in \sigma} x_i^*(x)x_i$. If $I=\N$, let $P_n = P_{\{1,\dots, n\}}$.

\begin{corollary}[cf {\cite[Theorem 24]{FPST}}]\label{uncbasissplitting}
Let $X$ have a monotone unconditional basis $\{e_i\}_{i \in I}$, with
associated projections $P_\sigma$, $\sigma \subset I$, and suppose
that we can write $S_X = \bigcup_{n=1}^\infty S_n$ in such a way that
the numbers
\[
b_n \;=\; \inf\set{\sup\set{\n{P_\sigma(x)}}{\sigma \subset I,\, \card(\sigma) = n}}{x \in S_n}
\]
are strictly positive and converge to $1$. Then $X$ admits an
equivalent (polyhedral) norm supporting a boundary having {\st}. If
$\{e_n\}_{n=1}^\infty$ is a monotone Schauder basis and
$b_n = \inf\set{\n{P_n(x)}}{x \in S_n}$ behave likewise, then we obtain
a norm as above, supporting a countable boundary having {\st}.
\end{corollary}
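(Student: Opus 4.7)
The plan is to reduce Corollary \ref{uncbasissplitting} to Corollary \ref{m-basissplitting}. Every unconditional basis $\{e_i\}_{i \in I}$, together with its biorthogonal system $\{e_i^*\}_{i \in I}$, forms a strong M-basis, so Corollary \ref{m-basissplitting} is available, and the whole task is to match the two sequences $b_n$ by converting the primal quantities $\sup_{\card\sigma = n}\n{P_\sigma(x)}$ into dual form.

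The key identity I would establish is
\[
\sup\set{\n{P_\sigma(x)}}{\sigma \subset I,\, \card(\sigma) = n} \;=\; \sup\set{f(x)}{f \in B_{X^*},\, \card(\supp f) \leq n}.
\]
For the $\geq$ direction, given $f \in B_{X^*}$ with $\card(\supp f) \leq n$, I enlarge $\supp f$ to a set $\sigma$ of size exactly $n$. Since the basis is strong, any functional whose support is contained in a finite set $\sigma$ lies in the finite-dimensional span of $\set{e_i^*}{i \in \sigma}$, so $f = \sum_{i \in \sigma} f(e_i) e_i^*$; consequently $f(x) = f(P_\sigma x) \leq \n{f}\n{P_\sigma x} \leq \n{P_\sigma x}$. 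For the reverse direction, given $\sigma$ with $\card\sigma = n$, Hahn--Banach produces $g \in B_{X^*}$ with $g(P_\sigma x) = \n{P_\sigma x}$; then $f := P_\sigma^* g$ satisfies $\supp f \subset \sigma$, $f(x) = g(P_\sigma x) = \n{P_\sigma x}$, and monotonicity of the unconditional basis gives $\n{f} \leq \n{P_\sigma}\n{g} \leq 1$.

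Once the identity is in place, the $b_n$ in Corollary \ref{uncbasissplitting} coincide with those in Corollary \ref{m-basissplitting}, so the first conclusion follows at once. For the second statement, restricting the same argument to the single choice $\sigma = \{1,\dots,n\}$ yields
\[
\n{P_n(x)} \;=\; \sup\set{f(x)}{f \in B_{X^*},\, \max(\supp f) \leq n},
\]
which is precisely the sequence appearing in the second half of Corollary \ref{m-basissplitting}, so that part of the corollary delivers a countable boundary having {\st}. The main obstacle, and really the only non-routine point, is verifying that $\n{P_\sigma^*} \leq 1$, which relies on interpreting \emph{monotone unconditional} as having unconditional constant equal to one, together with the strong M-basis property that permits the identification $f = \sum_{i \in \sigma} f(e_i) e_i^*$ for finitely supported $f$. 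These are standard facts, but being explicit about them keeps the reduction to Corollary \ref{m-basissplitting} clean.
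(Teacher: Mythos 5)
Your proof is correct and follows essentially the same route as the paper: reduce to Corollary \ref{m-basissplitting} by taking a Hahn--Banach functional $g$ attaining $\n{P_\sigma(x)}$ and observing that $P_\sigma^*(g) \in B_{X^*}$ has support in $\sigma$, with monotonicity supplying the norm bound. The reverse inequality you establish (that every $f \in B_{X^*}$ with $\card(\supp f) \leq n$ is dominated by some $\n{P_\sigma(x)}$) is correct but not needed, since the dual supremum is automatically at most $1$ and only the lower bound by the original $b_n$ is required to invoke Corollary \ref{m-basissplitting}.
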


\begin{proof}
Given $x \in X$ and $\sigma \subset I$, take $f\in B_{X^*}$ such
that $\n{P_\sigma(x)} = f(P_\sigma(x)) = (P_\sigma^*(f))(x)$. Clearly
$\supp P_\sigma^*(f) \subset \sigma$, and $P_\sigma^*(f) \in B_{X^*}$
as the basis is monotone. Now apply Corollary \ref{m-basissplitting}. The second
statement is dealt with similarly.
\end{proof}

In \cite[Theorem 3]{leung} it is proved, under the assumption that
$X$ is a Banach space with a shrinking basis $\{e_n\}_n$, that (a)
$X$ is isomorphically polyhedral if, and only if, (b) there exists an
equivalent norm, $\ndot_L$, such that $\{e_n\}_n$ is monotone, and 
for every $x\in X$ there exists $n_x \in \N$ satisfying $\n{x}_L=\n{P_{n_x}(x)}_L$.
In this result, the shrinking basis is used only in the proof that
(a) implies (b). Note that, if $X$ is a Banach space with a monotone
Schauder basis satisfying the hypothesis of Corollary \ref{uncbasissplitting},
namely $b_n = \inf\set{\n{P_n(x)}}{x \in S_n}$ is strictly positive and
converges to 1, then $X$ has an equivalent norm as in (b). This norm
can be obtained using Theorem \ref{renorm-pieces} (see statements (i)
to (iii) of the claim in its proof), applied to the sequence of norm
compact sets $H_n=P_n^*(B_{X^*})$. In this case we do
not need to check that the basis is shrinking to obtain (b).

With the help of Corollary \ref{uncbasissplitting}, we can obtain new
examples of isomorphically polyhedral spaces.

\begin{example}\label{new-example}
Let $\mathcal{A}=(A_n)_{n=0}^{\infty}$ be a sequence of finite
subsets of natural numbers $A_n \subset \N$, such that $\N =
\bigcup_{n=0}^{\infty}A_n$. Let $\mathbf{p} = \left(p_n\right)_{n=0}^{\infty}$
be an unbounded increasing sequence of real numbers $p_n \geq 1$
(necessarily tending to $\infty$). 

For each sequence of real numbers $x:\N \to \R$, set
\[
\Phi(x) \;=\; \sup\dset{\sum_{n=0}^{\infty} \sum_{k\in B_n}
|x(k)|^{p_n}}{B_n\subset A_n \text{ and } B_n \text{ are pairwise disjoint}}.
\]

Clearly, $\Phi$ is a convex function taking values in $[0,+\infty]$, and $\Phi(0)=0$.
By $\ell_{\mathcal{A},\mathbf{p}}$ we denote the space of all
sequences $x$ for which there is some $\lambda >0$ satisfying
$\Phi( x / \lambda) < +\infty$. As in the case of the Orlicz
sequence spaces, the function
\[
\n{x} \;=\; \inf\dset{\lambda>0}{\Phi( x / \lambda) \leq 1},
\]
defines a norm on $\ell_{\mathcal{A},\mathbf{p}}$.

By $h_{\mathcal{A},\mathbf{p}}$ we denote the norm closure of
the linear space generated by the canonical basis $e_n$
($e_n(k) = \delta_{k,n}$, i.e.\ $0$ if $k\neq n$ and $1$ if
$k=n$). We shall show that $h_{\mathcal{A},\mathbf{p}}$
has a polyhedral renorming, by applying Corollary
\ref{uncbasissplitting}.
\end{example}

\begin{proof}
First we mention that for $\|x\|\leq 1$, 
\[
\Phi(x)\;=\;\sup\dset{\sum_{n=0}^{\infty} \sum_{k\in B_n} |x_k|^{p(B_n)}}{B_n\subset A_n \text{ and } B_n \text{ are pairwise disjoint}},
\]
where  $p(B)=\min\set{p_n}{B \subset A_n}$.

It is easy to check that for each $x\in h_{\mathcal{A},\mathbf{p}}$,
we have $\Phi(x/\lambda)<+\infty$ for all $\lambda >0$, and $\Phi(x/\n{x})=1$
if $x\neq 0$. It is also not difficult to observe that the $e_n$
form a monotone basis of $h_{\mathcal{A},\mathbf{p}}$.

For each $m\in \N$, set $\alpha(m) = \min\set{n}{k\in A_n\text{ for some }k>m}$
and $\beta(m) = p_{\alpha(m)}$. Observe that $\alpha(m)$ and $\beta(m)$
are unbounded and increasing because the $A_n$ are finite.
If $B\subset A_n$ and $n<\alpha(m)$, then $\max\set{k}{k \in B}\leq m$,
and if $B\subset A_n$ and $n\geq\alpha(m)$, then $p(B)\geq \beta(m)$.
Given $x\in h_{\mathcal{A},\mathbf{p}}$, let $R_m(x)= x-P_m(x)$, where the $P_m$
are the standard projections introduced before Corollary \ref{uncbasissplitting}.

If $q\in (0,1)$, then for each $x \in h_{\mathcal{A},\mathbf{p}}$, there
is some $m=m(x)$ such that $\n{R_m x} < q$. Set $S_m =
\set{x\in S_{h_{\mathcal{A},\mathbf{p}}}}{m(x)=m}$.
Clearly $S_{h_{\mathcal{A},\mathbf{p}}}=\bigcup_{m=0}^{\infty}
S_m$. We are going to show that
\[
b_m \;=\; \inf\dset{\n{P_m(x)}}{x \in S_m}
\]
is strictly positive and converges to $1$.

Let $x \in S_m$. We have
\begin{align*}
1 &\;=\; \Phi(R_m(x)/ \n{R_m(x)}) \;\geq\; \Phi(R_m(x)/ q) \\
&\;\geq\; \sup\dset{\sum_{n\geq \alpha(m)}^{\infty}
\sum_{k\in B_n, k>m} \frac{|x(k)|^{p(B_n)}}{q^{\beta(m)}}}
{B_n\subset A_n,\, B_n \text{ pairwise disjoint}} \\
&\;=\; \frac{\Phi(R_m(x))}{q^{\beta(m)}}.
\end{align*}
Thus, $\Phi(R_m(x)) \leq q^{\beta(m)}$, and
\[
1 \;=\; \Phi(x) \;\leq\; \Phi(P_m(x))+\Phi(R_m(x)) \;\leq\;
\Phi(P_m(x)) + q^{\beta(m)},
\]
which yields $\Phi(P_m(x)) \geq 1-q^{\beta(m)}$.

On the other hand, since $\Phi$ is convex, $\Phi(0)=0$ and
$\n{P_m(x)} \leq \n{x} = 1$,
\[
\Phi(P_m(x)) \;\leq\; \n{P_m(x)} \Phi(P_m(x) /\n{P_m(x)}) \;=\; \n{P_m(x)},
\]
and we have $\|P_m(x)\| \geq 1- q^{\beta(m)}$. This proves that
the $b_m$ behave as required. By Corollary \ref{uncbasissplitting},
we obtain an equivalent polyhedral norm on $h_{\mathcal{A},\mathbf{p}}$
with a countable boundary having {\st}.
\end{proof}

\begin{remark}
N.\ Dew proved that if the subspace $X$ generated by the unit vector
basis of a modular Orlicz sequence space is asymptotic $\ell_\infty$,
then $X$ is isomorphically polyhedral \cite[Theorem 4.8.2]{dew}. The
spaces considered in Dew's result correspond to those of Example
\ref{new-example}, for sequences $\mathcal{A}$ of finite, pairwise
disjoint sets of integers, i.e.\ Nakano sequence spaces (see e.g.\
\cite{PR}).
\end{remark}

We can also use Corollary \ref{uncbasissplitting} to give a more
transparent and direct proof that certain Orlicz spaces $h_M$ are
isomorphically polyhedral.

\begin{example}[The polyhedral renorming of Orlicz spaces \cite{leung}]
Let $M$ be  a non-degenerate Orlicz function (i.e.\ $M(t)>0$ for all $t>0$),
with the additional property that
\begin{equation}\label{eq:hyp-leung}
\lim_{t\to 0} \frac{M(Kt)}{M(t)}\;=\;+\infty,
\end{equation}
for some constant $K>1$.  Let $h_M(\Gamma)$ be the space of all real
functions $x$ defined on $\Gamma$, satisfying $\sum_{\gamma\in \Gamma} M(x_{\gamma} /\rho)
<+\infty$ for all $\rho>0$, 
equipped with the Luxemburg norm
\[
\n{x} \;=\; \inf\set{\rho>0}{\sum_{\gamma\in \Gamma} M(x_{\gamma} /\rho) \leq 1}.
\]
Clearly the canonical unit vector basis $\{e_{\gamma}\}_{\gamma\in
\Gamma}$ of functions $e_{\gamma}(\beta)=\delta_{\gamma,\beta}$,
whose sole non-zero value is $1$ at $\beta=\gamma$, is
unconditionally monotone.

In \cite[Theorem 4]{leung}, D.\ H.\ Leung shows that $h_M(\N)$ is
isomorphically polyhedral. We are going to show that this space, as
well as $h_M(\Gamma)$ for every infinite set $\Gamma$, verifies the
hypothesis of Corollary \ref{uncbasissplitting}, using some
arguments that appear in Leung's proof.
\end{example}

\begin{proof}
Set $d_n=\inf\set{M(Kt)/M(t)}{0<|t| \leq M^{-1}(1/n)}$. Since $d_n>1$
and $\lim_n d_n=+\infty$, we can choose a sequence $b_n > 0$, such that
$b_n <(d_n-1)/d_n$ and $\lim_n b_n=1$.

Given $x \in h_M(\Gamma)$, we consider the support of $x$ as the
countable set
\[
\supp(x) \;=\; \set{\gamma\in \Gamma}{x(\gamma)\neq 0},
\]
and we can take an injective map $\pi:\N\to\Gamma$ such that
$\supp(x)\subset \pi(\N)$ and $\pi$ rearranges the support of $x$ in
a non-increasing way, i.e.\
$|x_{\pi(1)}|\geq|x_{\pi(2)}|\geq\dots\geq|x_{\pi(k)}|\geq\dots$.
Set $A_n(x)=\set{\pi(k)}{k\leq n}\subset\Gamma$.

We consider $S_n=\set{x\in S_X}{\n{P_{A_n(x)}(x)} \geq b_n}$. Evidently,
\[
1 \;\geq\; \inf\set{\sup\set{\n{P_\sigma}}{\sigma\subset\Gamma,\, \card(\sigma) = n}}{x\in S_n} \;\geq\; b_n \;\to\; 1.
\]
To finish, we check that the hypotheses of Corollary
\ref{uncbasissplitting} are fulfilled, by showing that $S_X =
\bigcup_n S_n$.

Assume the contrary, then there exists $x\in S_X\setminus \bigcup_n
S_n$, so we have $\n{P_{A_n(x)}(x)} < b_n$ for every $n\in\N$, and
by the definition of the norm of $h_M(\Gamma)$ we have
\begin{equation}
\label{eq:leung1} \sum_{\gamma\in A_{n}(x)} M(x_\gamma/b_n) \;\leq\; 1.
\end{equation}
Since $\card(A_n(x)) = n$, from \eqref{eq:leung1} we deduce
$\min\set{|x_{\gamma}|}{\gamma\in A_n(x)}\leq M^{-1}(1/n)$.

Using the fact that the basis of $h_M(\Gamma)$ is unconditionally
monotone, we have $\lim_n\n{x - P_{A_n(x)}(x)}=0$, so we can find $n
\in \N$ satisfying $\n{x - P_{A_n(x)}(x)}<K^{-1}$, meaning
\begin{equation}
\label{eq:leung2} \sum_{\gamma\not\in A_{n}(x)} M(Kx_\gamma) \;\leq\; 1.
\end{equation}

From the definition of $A_n(x)$, it follows that $|x_{\gamma}| \leq
\min\set{|x_{\beta}|}{\beta \in A_n(x)}\leq M^{-1}(1/n)$ for each
$\gamma \not\in A_n(x)$, and by the definition of $d_n$ we obtain
\begin{equation}
\label{eq:leung3} M(K x_\gamma) \;\geq\; d_n M(x_\gamma) \qquad\text{for all } \gamma\not\in A_{n}(x).
\end{equation}

Lastly, as $M$ is an Orlicz function and $b_n <1$, we have
$M(x_\gamma)\leq b_n M(x_\gamma/b_n)$. This, together with \eqref{eq:leung1},
\eqref{eq:leung2} and \eqref{eq:leung3}, yields the contradiction
\begin{align*}
1 \;=\; \sum_{\gamma\in \Gamma}M(x_\gamma)
&\;\leq\; b_n \sum_{\gamma\in A_n(x)} M(x_\gamma/b_n) + d_n^{-1}\sum_{\gamma\not\in A_n(x)} M(Kx_\gamma)\\
&\;\leq\; b_n + d_n^{-1} \;<\; 1. \qedhere
\end{align*}
\end{proof}

Let us remark that we do not know how to apply our method
to the space $h_M$, where $M$ is the Orlicz function constructed
by H\'ajek and Johanis \cite{HJ}.

Finally, we consider $C(K)$ spaces, where $K$ is compact and
in general non-metrizable.

\begin{corollary}[{\cite[Theorem 11]{FPST}}]
Let $K$ be a $\sigma$-discrete compact space. Then given $\eps>0$,
$C(K)$ admits a (polyhedral) norm $\tndot$ on $X$ that $\eps$-approximates the
original norm and admits a boundary having {\st}.
\end{corollary}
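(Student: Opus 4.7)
The plan is to invoke Theorem \ref{boundary} on the canonical evaluation boundary of $C(K)$. Concretely, I would set
\[
E \;=\; \set{\pm \delta_k}{k \in K} \;\subset\; C(K)^*,
\]
where $\delta_k$ is evaluation at $k$. Compactness of $K$ ensures that every $f\in S_{C(K)}$ attains $\pm \n{f}$ at some point, so $E$ is a boundary; and since $k\mapsto \delta_k$ is $w^*$-continuous from $K$ into $C(K)^*$, both $\set{\delta_k}{k\in K}$ and its negative are $w^*$-compact, so $E$ is in fact $w^*$-compact, a fortiori $w^*$-$\sigma$-compact.

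The next step, where the real work sits, is to produce the $\sigma$-$w^*$-LRC decomposition. Writing $K = \bigcup_{n=1}^\infty K_n$ with each $K_n$ relatively discrete in $K$, I would argue that $E_n = \set{\pm\delta_k}{k \in K_n}$ is $w^*$-discrete, and hence $w^*$-LRC by Example \ref{weakstarlrcex} (1). For a given $k \in K_n$, relative discreteness supplies an open $U \subset K$ with $U \cap K_n = \{k\}$; since $K$ is compact Hausdorff (thus normal), Urysohn's lemma furnishes $f_k \in C(K)$ with $0 \le f_k \le 1$, $f_k(k)=1$, and $f_k$ vanishing off $U$. Then $\set{\phi \in C(K)^*}{\phi(f_k) > \tfrac{1}{2}}$ is a $w^*$-open neighbourhood of $\delta_k$ that meets $\set{\delta_j}{j\in K_n}$ only at $\delta_k$, and the symmetric neighbourhood defined by $\phi(f_k)<-\tfrac{1}{2}$ isolates $-\delta_k$ among $\set{-\delta_j}{j\in K_n}$. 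Thus $E = \bigcup_n E_n$ is $\sigma$-$w^*$-LRC.

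With both hypotheses of Theorem \ref{boundary} verified for the single set $E$, applying that theorem with the prescribed $\eps$ immediately yields an equivalent norm $\tndot$ on $C(K)$ which $\eps$-approximates the original norm and which admits a boundary having property {\st}; in particular $\tndot$ is polyhedral. The main obstacle is precisely the $w^*$-discreteness step for each $E_n$: one must convert the purely topological fact that $K_n$ is relatively discrete in $K$ into the existence of continuous scalar functions on $K$ that separate $\delta_k$ from the other points of $E_n$ in the $w^*$-topology, and it is the normality of the compact Hausdorff space $K$, via Urysohn's lemma, that bridges these two sides.
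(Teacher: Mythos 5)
Your proposal is correct and follows essentially the same route as the paper: decompose $K$ into relatively discrete pieces $K_n$, observe that each $E_n = \set{\pm\delta_k}{k \in K_n}$ is $w^*$-discrete and hence $w^*$-LRC by Example \ref{weakstarlrcex} (1), note that the union of the $E_n$ is a $w^*$-compact boundary, and apply Theorem \ref{boundary}. The only difference is that you spell out the Urysohn-lemma argument for $w^*$-discreteness, which the paper leaves implicit; that detail is accurate.
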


\begin{proof}
If $K = \bigcup_{n=0}^\infty D_n$, where each $D_n$ is discrete, then
$E_n = \set{\pm \delta_t}{t \in D_n}$ is $w^*$-discrete, so
$w^*$-LRC by Example \ref{weakstarlrcex} (1). The union of the $E_n$
is a $w^*$-compact boundary of $C(K)$. Finish by applying Theorem \ref{boundary}.
\end{proof}

We provide the following partial converse to Theorem \ref{boundary} that
applies to $C(K)$ spaces.

\begin{proposition}\label{sigma-discrete-converse}
Suppose that $C(K)$ admits a boundary (with respect to the canonical sup norm) 
that is covered by $E=\bigcup_{n=0}^\infty E_n$, where each $E_n$ is
$w^*$-LRC. Moreover, suppose that $E$ is $w^*$-$\Ksig$. Then
$K$ is $\sigma$-discrete.
\end{proposition}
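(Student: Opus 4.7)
The plan is to show that $K$ is $\sigma$-discrete by establishing that $\Delta=\{\delta_t:t\in K\}\subset X^*$ is $\sigma$-$w^*$-discrete, which is equivalent since $t\mapsto\delta_t$ is a homeomorphism from $K$ onto $(\Delta,w^*)$. Two elementary facts drive the argument: first, Urysohn's lemma gives $\n{\delta_s-\delta_t}=2$ whenever $s\ne t$, so $\Delta$ is \emph{norm}-discrete with spacing~$2$; second, any $w^*$-compact and $w^*$-LRC subset of $X^*$ is automatically norm-compact, because a finite subcover by LRC neighborhoods gives norm total-boundedness while $w^*$-closedness in the bounded dual gives norm-completeness.

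First I would verify $\Delta\subseteq\cl{E}^{w^*}$. Fix $t\in K$ and a basic $w^*$-neighborhood $U$ of $\delta_t$ determined by $g_1,\dots,g_k\in C(K)$ and $\eps>0$. By the regularity of $K$, pick a closed neighborhood $V$ of $t$ inside $N=\set{s\in K}{|g_i(s)-g_i(t)|<\eps,\;1\le i\le k}$, and use Urysohn's lemma to produce $f\in C(K)$ with $0\le f\le 1$, $f|_V=1$, and $f|_{K\setminus N}=0$. A boundary element $\mu_f\in B\subseteq E$ satisfying $\mu_f(f)=\n{\mu_f}=1$ is forced, via the Jordan decomposition applied to $f\ge 0$, to be a positive probability measure supported on $\{f=1\}\subseteq N$; in particular $\mu_f\in U\cap E$.

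Next, writing $E=\bigcup_m L_m$ with each $L_m$ $w^*$-compact and extending each $E_n$ via Proposition \ref{lrctheory}(3) to $A_n=\cl{E_n}^{w^*}\cap V_n$, I would refine the intersections $A_n\cap L_m$ (taking $w^*$-closures inside $L_m$ if needed) to obtain a countable family of $w^*$-compact, $w^*$-LRC — hence norm-compact — pieces $F_k$ whose union still covers the boundary $B$. For each $\delta_t\in F_k$, Proposition \ref{lrctheory}(2) applied with $\eps<2$ furnishes a $w^*$-open neighborhood $U$ of $\delta_t$ with $\ndot$-$\diam(F_k\cap U)<2$; the norm-spacing~$2$ of $\Delta$ then forces $F_k\cap U\cap\Delta=\{\delta_t\}$, so $\set{t\in K}{\delta_t\in F_k}$ is discrete in $K$, and $K_\Delta:=\set{t\in K}{\delta_t\in E}$ is $\sigma$-discrete.

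The final and crucial step — which I expect to be the main obstacle — is to show $K_\Delta=K$, i.e.\ to upgrade $\Delta\subseteq\cl{E}^{w^*}$ to $\Delta\subseteq E$. My plan is to prove that for every $t$ there exists $m$ with $\delta_t\in\cl{B\cap L_m}^{w^*}$; since $L_m$ is $w^*$-closed this delivers $\delta_t\in L_m\subseteq E$. Were this to fail, for each $m$ one could choose a $w^*$-open neighborhood $U_m\ni\delta_t$ with $U_m\cap B\cap L_m=\varnothing$, and then points $\nu_M\in W_M\cap B$ (where $W_M=\bigcap_{m\le M}U_m$ is non-empty in $B$ because $\delta_t\in\cl{B}^{w^*}$) would lie in $L_{m_M}$ for some $m_M>M$; if the $W_M$ form a $w^*$-neighborhood base at $\delta_t$, which holds when $(X^*,w^*)$ is first-countable at $\delta_t$ (e.g.\ $K$ metrizable), then $\nu_M\xrightarrow{w^*}\delta_t$, and the norm-compactness of some $E_{n_0}$ containing a subsequence would produce the contradiction $\delta_t\in E_{n_0}\subseteq E$. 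Extending this sequential pigeonhole argument to general compact $K$ requires a more delicate net-theoretic combination of the $\Ksig$ and LRC hypotheses, which I expect constitutes the technical heart of the proof.
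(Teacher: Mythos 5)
Your first two steps are sound and match the paper's opening move: the norm spacing $\n{\delta_s-\delta_t}=2$ combined with the $w^*$-LRC property does show that each $E_n\cap K$ is discrete in $K$, so the set $\set{t\in K}{\delta_t\in E}$ is $\sigma$-discrete. The genuine gap is exactly where you flag it: the passage from $\Delta\subseteq\cl{E}^{w^*}$ to $\Delta\subseteq E$. Your pigeonhole argument needs $\delta_t\in\cl{B\cap L_m}^{w^*}$ for a \emph{single} $m$, which does not follow from $\delta_t\in\cl{B}^{w^*}$ and $B\subseteq\bigcup_m L_m$ without first-countability at $\delta_t$ (a point can lie in the closure of a countable union without lying in the closure of any one piece, and the relevant subspace need not be Baire). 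Since no workable net-theoretic substitute is supplied, the proof is incomplete for general compact $K$, which is precisely the case of interest.

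The paper closes this gap by an entirely different, indirect route. First, every boundary contains the $w^*$-exposed points of $B_{C(K)^*}$, and $\delta_t$ is $w^*$-exposed whenever $\{t\}$ is a $G_\delta$ in $K$ (take $f\in S_{C(K)}$ with $f(t)=1>|f(s)|$ for all $s\neq t$); hence every $G_\delta$-point of $K$ lies in $E$. Second, the hypotheses already imply, via Theorem \ref{boundary}, that $C(K)$ is isomorphically polyhedral, hence Asplund, hence $K$ is scattered. Third, since $E$ is $w^*$-$\Ksig$, the set $K\setminus E$ is a $G_\delta$ in $K$; were it non-empty, scatteredness would give it a relatively isolated point, which would then be a $G_\delta$-singleton of $K$ and therefore a $w^*$-exposed point belonging to $E$ --- a contradiction. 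So $K\subseteq E$ with no first-countability assumption. The moral is that the $w^*$-$\Ksig$ hypothesis is used not to run a compactness or pigeonhole argument at each point, but to make $K\setminus E$ a $G_\delta$, while the decisive topological input (scatteredness of $K$) is extracted from the polyhedral renorming that Theorem \ref{boundary} already provides.
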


\begin{proof}
Let $\delta_t \in C(K)^*$ denote the evaluation functional corresponding
to $t \in K$. Throughout this proof, we shall identify all subsets of $K$
with their canonical images inside $C(K)^*$. Observe that $E_n \cap K$
is a discrete subset of $K$. Indeed, if $\delta_t \in E_n \cap K$ then there
exists a $w^*$-open subset $U \subset C(K)^*$ such that $\delta_t \in U$
and $E_n \cap U$ is totally bounded. Bearing in mind that $\n{\delta_r-\delta_s}_1 = 2$
whenever $r,\,s \in K$ are distinct (where $\ndot_1$ is the usual norm on $C(K)^*$),
it follows that $E_n \cap K \cap U$ must be finite. We complete the proof
by showing that $K \subset E$.

Any boundary of any Banach space $Y$ must contain the $w^*$-exposed
points of $B_{Y^*}$. Denote by $G$ the set of all $t \in K$ for
which $\{t\}$ is a $G_\delta$ subset of $K$. For each $t \in G$, there exists
a function $f \in S_{C(K)}$ such that $f(t)=1 > |f(s)|$ for all $s \in K\setminus\{t\}$.
Therefore $\delta_t$ is a $w^*$-exposed point of $B_{C(K)^*}$
and $G \subset E \cap K$.

We claim that this forces $E \cap K = K$. Suppose otherwise. According to
Theorem \ref{boundary}, we know that $X$ admits an equivalent
polyhedral norm and is thus an Asplund space \cite{fonf2}, so $K$ is scattered.
As $E$ is $w^*$-$\Ksig$, the non-empty set $K\setminus E$
must be a $G_\delta$. Since $K$ is scattered, $K\setminus E$ contains a
relatively isolated point $t$, and as $K\setminus E$ is a $G_\delta$, this
implies $t \in G$, which is a contradiction.
\end{proof}

It is important to note that the boundary of $C(K)$ in Proposition
\ref{sigma-discrete-converse} must be taken with respect to the usual norm.

\begin{example}
Let $\Gamma$ be a set and let $K$ be a family of subsets of $\Gamma$
that is (a) compact in the topology of pointwise convergence, and
(b) hereditary, in the sense that $A \in K$ whenever $A \subseteq B$
and $B \in K$. Define $\ell_K$ to be the set of all real functions
$x$ on $\Gamma$, such that $\n{x}_K = \sup_{A \in K}
\left(\sum_{\gamma \in A} |x_\gamma|\right)$ is finite. Set $h_K =
\cl{\s}^{\ndot_K}(e_\gamma)_{\gamma \in \Gamma}$, where $e_\gamma$
is the standard unit vector. Such spaces have been studied in e.g.\
\cite{am:93} and \cite{st:09}. It is easy to check that
$\{e_\gamma\}_{\gamma\in \Gamma}$ forms a normalised 1-unconditional
basis of $h_K$. For instance, if $K$ is the set of all subsets of
$\Gamma$ of cardinality at most $1$, then $\ell_K =
\ell_\infty(\Gamma)$ and $h_k = c_0(\Gamma)$.

If every element of $K$ is finite, then $h_K$ admits an equivalent
polyhedral norm. First, the map $T:h_K \to C(K)$, given by $(Tx)(A)
= \sum_{\gamma \in A} x_\gamma$, is an isomorphism (because $K$ is
hereditary). Second,  $K$ is $\sigma$-discrete, because each set
$\set{A \in K}{\card(A)=n}$ is discrete. If $K$ contains an infinite
element then $\ell_1$ embeds into $h_K$, so it does not admit an
equivalent polyhedral norm.
\end{example}

\end{document}